\newcommand{\F}{{\mathbb F}}
\newtheorem{theorem}{Theorem}
\newtheorem{corollary}[theorem]{Corollary}
\newtheorem*{definition*}{Definition}
\newtheorem{claim}{Claim}[theorem]
\renewcommand{\theclaim}{\arabic{claim}}
\newenvironment{proofofclaim}[1][\proofname\ of Claim \theclaim]{%
  \proof[#1]%
  
}{\endproof}
\numberwithin{equation}{section}
\numberwithin{theorem}{section}
\newcommand{\genlegendre}[4]{%
  \genfrac{(}{)}{}{#1}{#3}{#4}%
  \if\relax\detokenize{#2}\relax\else_{\!#2}\fi
}
\newcommand{\legendre}[3][]{\genlegendre{}{#1}{#2}{#3}}
\title[Maximal line-free sets in $\mathbb{F}_p^n$]%
  {Maximal line-free sets in $\mathbb{F}_p^n$}
\author[Elsholtz et al.]{Christian Elsholtz}
\address{Christian Elsholtz and Jakob F\"uhrer, Institute of Analysis and Number Theory, Graz University of Technology,
Kopernikusgasse 24/II,
8010 Graz, Austria.}
\email{elsholtz@math.tugraz.at}
\author[]{Jakob F\"uhrer}
\email{jakob.fuehrer@tugraz.at}
\author[]{Erik F\"uredi}
\address{Erik F\"uredi, ELTE E\"otv\"os Lor\'and University Faculty of Science, 1117 Budapest, P\'azm\'any P\'eter s\'et\'any 1/A, Hungary.}
 \email{erikfuredi@gmail.com}
\author[]{Benedek Kov\'acs}
\address{Benedek Kov\'acs, ELTE Linear Hypergraphs Research Group, Eötvös Loránd University, 1117 Budapest, P\'azm\'any P\'eter s\'et\'any 1/A, Hungary.}
 \email{benoke98@student.elte.hu}
\author[]{P\'eter P\'al Pach}
\address{P\'eter P\'al Pach, Department of Computer Science and Information Theory,  Budapest University of Technology and Economics,  M\H{u}egyetem rkp. 3., H-1111 Budapest, Hungary;  MTA-BME Lend\"ulet Arithmetic Combinatorics Research Group,
  M\H{u}egyetem rkp. 3., H-1111 Budapest, Hungary}
 \email{pach.peter@vik.bme.hu}
\author[]{D\'aniel G\'abor Simon}
\address{D\'aniel G\'abor Simon, HUN-REN Alfr\'ed R\'enyi Institute of Mathematics, Re\'altanoda street 13-15, H-1053 Budapest, Hungary.;  ELTE E\"otv\"os Lor\'and University Faculty of Science, 1117 Budapest, P\'azm\'any P\'eter s\'et\'any 1/A, Hungary.}
 \email{dgs45@cantab.ac.uk}
\author[]{N\'ora Velich}
\address{N\'ora Velich, University of Cambridge, Lucy Cavendish College, Lady Margaret Road, Cambridge CB3 0BU, UK.}
 \email{nzv20@cam.ac.uk}
\date{}
\subjclass{51E21, 11B25, 05D05}
\keywords{line-free sets, blocking sets, sets without arithmetic progressions, extremal combinatorics, combinatorics in $\mathbb{F}_p^n$}
\thanks{}
\begin{document}

\begin{abstract}

We study subsets of $\mathbb{F}_p^n$ that do not contain progressions of length $k$. We denote by $r_k(\mathbb{F}_p^n)$ the cardinality of such subsets containing a maximal number of elements.

In this paper we focus on the case $k=p$ and therefore sets containing no full line.
A~trivial lower bound $r_p(\mathbb{F}_p^n)\geq(p-1)^n$ is achieved by a hypercube of side length $p-1$ and it is known that equality holds for $n\in\{1,2\}$. We will however show that $r_p(\mathbb{F}_p^3)\geq (p-1)^3+p-2\sqrt{p}$, which is the first improvement in the three dimensional case that is increasing in $p$. 

We will also give the upper bound $r_p(\mathbb{F}_p^{3})\leq p^3-2p^2-(\sqrt{2}-1)p+2$ as well as generalizations for higher dimensions.

Finally we present some bounds for individual $p$ and $n$, in particular $r_5(\mathbb{F}_5^{3})\geq 70$ and $r_7(\mathbb{F}_7^{3})\geq 225$ which can be used to give the asymptotic lower bound $4.121^n$ for $r_5(\mathbb{F}_5^{n})$ and $6.082^n$ for $r_7(\mathbb{F}_7^{n})$.

\end{abstract}

\date{\today}
\maketitle

\section{Introduction}

In the intersection of finite geometry and extremal combinatorics numerous problems about finding maximal subsets of affine or projective spaces avoiding certain configurations have been studied. One natural question asks for bounds on the cardinality of subsets of the $n$-dimensional affine space over a finite field $\mathbb{F}_q$ that do not contain a full line.

We denote by $r_k(\mathbb{F}_p^n)$ the cardinality of a subset $S\subseteq \mathbb{F}_p^n$, containing a maximal number of elements such that $S$ contains no $k$ points in arithmetic progression. Note that in the case when $k=p$ is a prime, $k$-progressions in  $\mathbb{F}_p^n$ correspond to lines in the $n$-dimensional affine space and we are therefore interested in bounds on $r_p(\mathbb{F}_p^n)$.

When $p=3$ the problem coincides with the cap set problem, a well studied area where one can use the fact that  $x,y,z$ form a line exactly when they fulfil a non-trivial linear equation $ax+by+cz=0$ where $a+b+c=0$. Namely, with $a=b=c=1$. Ellenberg and Gijswijt \cite{EG} gave the first exponential improvement to the trivial upper bound of $3^n$ with $r_3(\mathbb{F}_3^{n})<2.756^n$, for large enough $n$ which was further improved by Jiang \cite{Jiang} by a factor of $\sqrt{n}$. The best lower bound was given by Romera-Paredes et al.~\cite{DeepMind} with $r_3(\mathbb{F}_3^{n})>2.220^n$, for large enough $n$. The exact values of $r_3(\mathbb{F}_3^{n})$ are known up to $n=6$, where $r_3(\mathbb{F}_3^{6})=112$ was proven by Potechin \cite{Potechin}.

For the general case surprisingly few results on $r_p(\F_p^n)$ are known. There is the trivial lower bound $r_p(\mathbb{F}_p^n)\geq(p-1)^n$ achieved by a hypercube of side length $p-1$. Jamison \cite{Jamison} and Brouwer and Schrijver~\cite{BS} independently proved that this is sharp for $n=2$. For $n=3$ the only improvement to this construction was by a single point described in the post of Zare in a mathoverflow thread \cite{Mathoverflow}. We will prove the following lower bounds:

\begin{theorem}
\label{LBhighp}
Let $p\geq 5$ be a prime, then $$r_p(\mathbb{F}_p^3) \geq (p-1)^3+p-2\sqrt{p}=p^3-3p^2+4p-2\sqrt{p}-1.$$
\end{theorem}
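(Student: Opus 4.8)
The plan is to construct an explicit line-free set in $\F_p^3$ that beats the hypercube bound by roughly $p - 2\sqrt{p}$ points. I would start from the base hypercube $H = \{0, 1, \ldots, p-2\}^3$, which has $(p-1)^3$ points and is line-free, and then try to add points in the ``missing'' slabs (those coordinate slabs containing the value $p-1$) while carefully controlling which lines could be created. The key observation is that a line in $\F_p^3$ in direction $d \in \F_p^3 \setminus \{0\}$ is a full coset of $\langle d \rangle$; so any new point I add can only complete a line if, for some direction $d$, the entire arithmetic progression through that point already lies in my set. The guiding principle is to add a structured set of points lying (mostly) in one extra slab, say $\{x_3 = p-1\}$, chosen so that within that slab they form a set whose interaction with $H$ avoids completing lines.

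The main work is in the slab $\{x_3 = p-1\}$. A line meeting this slab either lies entirely inside it (a ``horizontal'' line, where $d_3 = 0$) or crosses it transversally ($d_3 \neq 0$). For transversal lines I would use the fact that $H$ occupies only the values $\{0, \ldots, p-2\}$ in the first two coordinates: I want to choose the added set $A \subseteq \F_p^2$ (identified with points $(a, p-1)$) so that no transversal line through a point of $A$ has all its other $p-1$ points inside $H$. Counting: a transversal line hits each of the $p$ slabs exactly once, so it has exactly one point in $\{x_3 = p-1\}$ and $p-1$ points in the other slabs; I need that for every $a \in A$, at least one of those $p-1$ companion points avoids $H$. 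For horizontal lines I need $A$ itself to be line-free as a subset of the plane $\F_p^2$, so $|A| \le r_p(\F_p^2) = (p-1)^2$ by Jamison--Brouwer--Schrijver; but I will not use a maximal planar set, rather a cleverly shaped one. To gain $p - 2\sqrt{p}$ points net I expect to take $A$ roughly of the form ``a $(p-1) \times 1$ strip plus a near-diagonal structure of size about $p - 2\sqrt{p}$,'' where the $\sqrt{p}$ deficiency comes from the need to delete points whose transversal lines would otherwise close up inside $H$.

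Concretely, I would look for a subset $A$ of the last slab that adds approximately $p$ points beyond what a naive single-slab extension allows, then subtract the $O(\sqrt{p})$ points that create transversal lines. The Legendre symbol / quadratic residue structure (hinted at by the $\sqrt{p}$ term and the \texttt{legendre} macros defined in the preamble) strongly suggests that the added configuration is built from a quadratic curve, e.g. points of the form $(t, t^2, p-1)$ or a related conic, whose intersection pattern with the hypercube's slabs is governed by how many $t$ satisfy a quadratic residue condition. The $2\sqrt{p}$ loss would then come from Weil-type bounds on the number of points where the quadratic curve forces a full progression into $H$. So the skeleton is: (i) fix $H = \{0,\ldots,p-2\}^3$; (ii) define the candidate addition $A$ via an explicit polynomial/quadratic-residue recipe in one extra slab; (iii) verify no horizontal line appears (planar line-freeness of $A$); (iv) verify no transversal line closes up by bounding the bad set and removing it; (v) count $|H| + |A|$ and confirm the net gain is $p - 2\sqrt{p}$.

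The hardest step will be step (iv): controlling the transversal lines. The difficulty is that transversal lines in direction $d$ with $d_3 \neq 0$ are numerous and each added point lies on $p^2$ of them (one for each choice of $(d_1, d_2)$ up to scaling by the fixed nonzero $d_3$), so I must show that for each such line at least one companion point escapes the hypercube. This is where an exact count of residues modulo $p$ enters, and where I expect the irreducible $\sqrt{p}$ error term to be unavoidable — it will come from a character sum or from counting lattice points on a conic modulo $p$, and it is precisely the obstruction that prevents a clean gain of a full $p$ points. I would prove the needed estimate by reducing ``the line through $(a, p-1)$ in direction $d$ stays in $H$'' to an explicit condition on $a$ and $d$, summing over directions, and applying the relevant quadratic residue count to isolate the at most $2\sqrt{p}$ points of $A$ that must be discarded.
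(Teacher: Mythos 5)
Your plan has a fatal flaw at the very first step: the hypercube $H=[0,p-2]^3$ is \emph{maximally} line-free, so the set $A$ of addable points in your step (ii) is empty, and no refinement of steps (iii)--(v) can rescue this. Concretely, take any candidate point $(a_1,a_2,p-1)$ in your extra slab. If $a_1,a_2\in[0,p-2]$, the vertical line in direction $(0,0,1)$ has all its other $p-1$ points in $H$; if $a_1=p-1$ and $a_2\in[0,p-2]$, the line in direction $(1,0,1)$ through it has all other points $(p-1+i,\,a_2,\,p-1+i)$, $i\neq 0$, inside $H$; symmetrically for $a_2=p-1$; and for $(p-1,p-1,p-1)$ the main diagonal direction $(1,1,1)$ does the same. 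So \emph{every} point outside $H$ completes a line whose remaining $p-1$ points already lie in $H$, and adding further points elsewhere cannot undo this. This is exactly why improving on $(p-1)^n$ is hard and why, before this paper, the best known gain in dimension $3$ was a single point (via a different modification). Any successful construction must first \emph{delete} points from the hypercube to make room, which your proposal never does.

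This is also what the paper's proof does: it keeps only the layers $[0,p-3]\times[0,p-2]^2$ of the cube, and then builds two new layers from scratch. With $k=\lfloor\sqrt{p}\rfloor$, $t=\lfloor p/k\rfloor$, $K=[0,k-1]$, $T=\{jk-1 \mid j\in[1,t]\}$, layer $p-2$ is taken to be $[0,p-1]^2$ minus the diagonal minus the product set $(K\cup\{p-1\})\times(T\cup\{p-1\})$ (note this layer sticks out of the cube in \emph{both} remaining coordinates, which is what a single-slab extension can never do), and layer $p-1$ is the small grid $K\times T$, which has $kt<p$ points and so cannot contain a line by itself. The diagonal deletion kills the in-layer lines of layer $p-2$, and the deleted product set is positioned so that transversal lines and diagonal-parallel lines always miss the set. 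Your second guess is also off: the $2\sqrt{p}$ loss has nothing to do with quadratic residues, character sums, or Weil bounds (the Legendre-symbol machinery in the paper is used for Theorem~\ref{LB7}, the $p\equiv 7 \pmod{24}$ case); it comes from the purely elementary estimate $k+t\le 2\sqrt{p}$ for the grid parameters, since the count works out to $|S|=(p-2)(p-1)^2+p^2-p+1-k-t$.
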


This can be improved in some special cases.

\begin{theorem}
\label{LB7}
Let $p$ be a prime with $p\equiv 7$ \textnormal{(mod $24$)}, then 
$$r_p(\mathbb{F}_p^3)\geq(p-1)^3+(p-1)=p^3-3p^2+4p-2.$$

Moreover, $r_7(\mathbb{F}_7^3)\geq 225$.
\end{theorem}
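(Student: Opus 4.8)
The plan is to realize the extremal set through the last coordinate. Writing $S\seq\mathbb{F}_p^3$ as a disjoint union of \emph{layers} $S=\bigcup_{z\in\mathbb{F}_p}(L_z\times\{z\})$ with $L_z\seq\mathbb{F}_p^2$, I would separate lines into two families. A line contained in a single plane $\{x_3=z\}$ is a line of $\mathbb{F}_p^2$, so avoiding these is exactly the requirement that each $L_z$ be line-free in $\mathbb{F}_p^2$; by Jamison and Brouwer--Schrijver this forces $|L_z|\le(p-1)^2$. Every other line meets each plane exactly once and is therefore of the form $z\mapsto(a+zb,z)$ for some $a,b\in\mathbb{F}_p^2$ (the case $b=0$ giving the \emph{vertical} lines). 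Hence $S$ is line-free if and only if each $L_z$ is line-free in $\mathbb{F}_p^2$ and, for every $a,b\in\mathbb{F}_p^2$, there is some $z$ with $a+zb\notin L_z$. The whole construction reduces to choosing the layers $L_z$ so as to make $\sum_z|L_z|$ large while meeting this transversal condition.

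The hypercube corresponds to $L_z=\{1\longc p-1\}^2$ for $z\neq 0$ and $L_0=\est$, the empty layer killing all transversal lines at once but wasting $(p-1)^2$ points. To gain $p-1$ I would perform a controlled exchange: delete a sparse set from the interior layers and insert a set $A$ of comparable size into the boundary layer, where $A$ is placed along an explicit conic. The key point is that a vertical line through a point $q$ with both coordinates nonzero already survives in the pure hypercube, so every point added to the boundary layer must be compensated by a deletion that breaks its vertical line and any transversal through it; the congruence $p\equiv 7\pmod{24}$ is used precisely to control which slopes $b$ can complete a transversal through two points of the inserted conic. Concretely, $p\equiv 7\pmod{24}$ guarantees that $2$ and $-3$ are quadratic residues while $-1$ is not, and these residue conditions are exactly what force the relevant discriminants to be non-squares, so that no three of the modified points are collinear across the layers. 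Verifying conditions (a) and (b) for this family then yields $\sum_z|L_z|=(p-1)^3+(p-1)$.

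The main obstacle is the transversal condition (b), and within it the interplay between vertical and axis-parallel lines: these are the directions for which a boundary point is \emph{not} automatically blocked by the hypercube, so the construction cannot merely add points but must balance additions against deletions without net loss. Establishing that the balanced configuration leaves no surviving progression $z\mapsto a+zb$ is where the arithmetic of $p$ enters, and it is the step I expect to require the most care; for slopes $b$ with both entries nonzero the blocking is governed by the hypercube and is automatic, so the analysis concentrates on the finitely many degenerate slopes.

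For the sharper bound $r_7(\mathbb{F}_7^3)\ge 225$ I would exhibit the seven layers $L_0\longc L_6\seq\mathbb{F}_7^2$ explicitly, chosen so that $\sum_{z}|L_z|=225$, improving on the $222$ that the general construction gives at $p=7$. Since $p=7$ is fixed, line-freeness then reduces to a finite verification: checking that each $L_z$ contains no line of $\mathbb{F}_7^2$ and that, for each of the $49$ slopes $b$ and each intercept $a$, some layer omits the corresponding point $a+zb$. This check is routine to carry out directly, and can be confirmed by computer, completing the proof.
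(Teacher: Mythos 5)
Your layer decomposition and transversal-blocking criterion are sound, and your residue facts for $p\equiv 7\pmod{24}$ are correct ($2$ and $-3$ are squares, $-1$ is not). But beyond the fact that the inserted set, the deletions, and the verification are never actually specified, the scheme you commit to --- insert points only into the boundary layer $L_0$, delete points from the hypercube layers $L_z\subseteq[1,p-1]^2$, $z\neq 0$ --- provably cannot reach $(p-1)^3+(p-1)$. Let $H$ be the number of deleted points (``holes''), so $|S|=(p-1)^3+|L_0|-H$. Each point of $L_0$ with both coordinates nonzero forces a hole at its own position (to break its vertical line), and distinct points force holes at distinct positions. Each point $(0,y)\in L_0$, $y\neq 0$, forces at least $p-1$ holes with position in row $y$: the lines $z\mapsto(zd_1,y,z)$, $d_1\neq 0$, stay inside the hypercube for $z\neq0$ and are pairwise disjoint there, so each needs its own hole; symmetrically each $(x,0)$ forces $p-1$ holes in column $x$, and $(0,0)$ forces $(p-1)^2$ holes. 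If $(0,0)\in L_0$ then $H\geq(p-1)^2\geq|L_0|$ by the Jamison/Brouwer--Schrijver bound and the gain is non-positive; so assume $(0,0)\notin L_0$ and sum row by row: a row containing $(0,y)$ holds at most $p-1$ points of $L_0$ (line-freeness of $L_0$) but carries at least $p-1$ holes, and a row without such a point holds at most as many points of $L_0$ as it has holes. Hence $|L_0|-H\leq n_{\mathrm{col}}$, the number of points $(x,0)\in L_0$; and if $n_{\mathrm{col}}\geq p-1$ the column quotas give $H\geq(p-1)^2\geq|L_0|$ again. In all cases $|L_0|-H\leq p-2$, so your exchange is capped at $(p-1)^3+p-2$, one point short of the theorem, no matter how the conic and the residue conditions are chosen.

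The paper's construction escapes this cap precisely by doing what your scheme forbids: it adds points to every nonzero layer, not only to layer $0$. A layer $z=a$ with $a$ a quadratic residue gains $(a,0)$ and $(0,a)$ and loses two diagonal points; a layer $z=b$ with $b$ a non-residue gains the four points $(3b/2,0),(0,3b/2),(3b,0),(0,3b)$ and loses five; layer $0$ receives $\tfrac32(p-1)$ points. The net gain of $p-1$ lives entirely in this interplay, which is invisible to a boundary-layer-only exchange. Moreover, the residues are used not via discriminants or non-collinearity of the inserted points, but via multiplicativity: any line through two of the special points is shown to contain a third point at a special position, the residue classes pin down which of the allowed forms that point could have, and each possibility collapses to a false identity such as $-1=3$ or $2=1$. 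Finally, for $p=7$ the paper performs no separate search: the bound $225$ comes from the same construction because $3/2=1/3$ in $\mathbb{F}_7$, so two of the deleted points in each non-residue layer coincide; your plan to exhibit seven layers and check them by computer is not a proof until such a set is actually exhibited.
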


The simple upper bound $r_p(\mathbb{F}_p^n)\leq p^n-\frac{p^n-1}{p-1}$ was given by Aleksanyan and Papikian \cite{AP} and is achieved by removing at least one point from each line going through a fixed point. In particular $r_p(\mathbb{F}_p^3)\leq p^3-p^2-p-1$. The stronger bounds $r_p(\mathbb{F}_p^n)\leq p^n-2p^{n-1}+1$ and $r_p(\mathbb{F}_p^3)\leq p^3-2p^{2}+1$ can be obtained by a result of Sziklai~\cite[Proposition~4.1]{Sziklai} (see also \cite{ball2009},~\cite{BDGP}). We will give the following new bounds:

\begin{theorem}
\label{ubmain}
Let $p\geq 3$ be a prime, $k\in \{3,4,\dots,p\}$ and $n\in \mathbb{N}$, then

$${r_k(\mathbb{F}_p^{n+1})}\leq \frac{2(p^
{n+1}-1)r_k(\mathbb{F}_p^n)+p^n-\sqrt{4(p^
{n+1}-1)r_k(\mathbb{F}_p^n)(p^n-r_k(\mathbb{F}_p^n))+p^{2n}}}{2p^n},$$
\end{theorem}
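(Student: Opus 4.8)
The plan is to bound an extremal $k$-AP-free set $S\subseteq\F_p^{n+1}$ of size $R:=|S|$ by slicing the space into affine hyperplanes. Since every affine hyperplane $H$ is affinely isomorphic to $\F_p^n$, and affine isomorphisms carry $k$-term progressions to $k$-term progressions, each slice $S\cap H$ is $k$-AP-free and hence $|S\cap H|\le r:=r_k(\F_p^n)$. First I would encode this as a \emph{deficiency} $d_H:=r-|S\cap H|\ge 0$, and exploit the key structural fact that the $p$ hyperplanes of any fixed parallel class $\cC$ partition $\F_p^{n+1}$: their slice-sizes sum to $R$, so $\sum_{H\in\cC}d_H=pr-R$.

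The argument then rests on two computations. First, by double counting I would pin down $\sum_H d_H^2$ (the sum over \emph{all} affine hyperplanes) exactly as a function of $R$: there are $\tfrac{p(p^{n+1}-1)}{p-1}$ affine hyperplanes, every point lies in $\tfrac{p^{n+1}-1}{p-1}$ of them and every pair of points in $\tfrac{p^n-1}{p-1}$ of them, which determines $\sum_H|S\cap H|$ and $\sum_H\binom{|S\cap H|}{2}$, and hence $\sum_H d_H^2=r^2\cdot\tfrac{p(p^{n+1}-1)}{p-1}-2r\sum_H|S\cap H|+\sum_H|S\cap H|^2$ as an explicit quadratic in $R$. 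Second, within each parallel class the deficiencies are nonnegative and sum to $pr-R$, so the elementary inequality $\sum d^2\le(\sum d)^2$ gives $\sum_{H\in\cC}d_H^2\le(pr-R)^2$; summing over all $\tfrac{p^{n+1}-1}{p-1}$ parallel classes yields $\sum_H d_H^2\le \tfrac{p^{n+1}-1}{p-1}(pr-R)^2$.

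Combining the exact value with this upper bound and clearing denominators, I expect everything to collapse (using $p^{n+1}-1-p^n+1=p^n(p-1)$ to keep the algebra compact) to the single quadratic inequality $p^nR^2-\big(2(p^{n+1}-1)r+p^n\big)R+p(p^{n+1}-1)r^2\ge 0$, whose discriminant is exactly $4(p^{n+1}-1)r(p^n-r)+p^{2n}$. Solving it and taking the smaller root reproduces the claimed bound. To justify selecting the \emph{smaller} root I would note that $R\le pr$ (the $p$ slices of a single class each have at most $r$ points) and check that the quadratic evaluated at $R=pr$ equals $pr(r-p^n)<0$, since $r<p^n$ for $n\ge 1$ ($\F_p^n$ contains a line and hence a $k$-progression); thus $pr$ lies strictly between the two roots, which forces $R$ below the smaller one.

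The main obstacle — indeed the only genuinely clever point — is the second computation: recognizing that the \emph{trivial} per-class inequality $\sum d^2\le(\sum d)^2$ for nonnegative deficiencies, once aggregated over all parallel classes against the design identity for $\sum_H d_H^2$, is precisely strong enough to produce the square-root saving over the naive linear bound coming from $\sum_H|S\cap H|^2\le r\sum_H|S\cap H|$. A natural first attempt — applying Cauchy–Schwarz or convexity to the slice-sizes globally — only reproduces the weaker linear estimate, so the insight is that one must keep the per-class partition constraint $\sum_{H\in\cC}d_H=pr-R$ and use it class by class. Verifying that the two resulting expressions for $\sum_H d_H^2$ combine to exactly the stated discriminant is then a routine but slightly lengthy simplification.
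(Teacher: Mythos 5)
Your proposal is correct and takes essentially the same route as the paper: both arguments double-count point pairs over affine hyperplanes (each pair lying in $\tfrac{p^n-1}{p-1}$ of them), bound the contribution of each parallel class by its extremal configuration, arrive at the identical quadratic $p^nR^2-\bigl(2(p^{n+1}-1)r+p^n\bigr)R+p(p^{n+1}-1)r^2\ge 0$, and discard the larger root using $R\le pr$. Your per-class deficiency inequality $\sum_{H\in\mathcal{C}}d_H^2\le(pr-R)^2$ is algebraically equivalent to the paper's bound $(p-1)\binom{r}{2}+\binom{R-(p-1)r}{2}$, and in fact handles uniformly the case $R<(p-1)r$ that the paper must remark on separately.
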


where the three-dimensional case gives the following corollary.

\begin{corollary}\label{cor-upp}
Let $p\geq 3$ be a prime, then $$r_p(\mathbb{F}_p^3) \leq\frac{2p^5-4p^4+2p^3-p^2+4p-2-\sqrt{8p^6-20p^5+17p^4-12p^3+20p^2-16p+4}}{2p^2},$$
in particular,
$$r_p(\mathbb{F}_p^3)\leq p^3-2p^2-(\sqrt{2}-1)p+2.$$

\end{corollary}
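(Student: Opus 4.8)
The plan is to derive the explicit bound by specializing Theorem~\ref{ubmain} and then to pass to the cleaner estimate by a single-variable square-root comparison. For the first inequality I take $n=2$ and $k=p$ in Theorem~\ref{ubmain} and insert the exact value $r_p(\mathbb{F}_p^2)=(p-1)^2$, which holds for prime $p$ by the results of Jamison and of Brouwer--Schrijver quoted above. Writing $R=(p-1)^2$ one has $p^2-R=2p-1$, so the non-radical part of the numerator becomes $2(p^3-1)R+p^2$ and the radicand becomes $4(p^3-1)R(2p-1)+p^4$. Expanding both products (routine bookkeeping) gives exactly $2p^5-4p^4+2p^3-p^2+4p-2$ and $8p^6-20p^5+17p^4-12p^3+20p^2-16p+4$, which is precisely the first displayed bound.

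For the second, simpler inequality the goal is to show that this closed form is at most $p^3-2p^2-(\sqrt{2}-1)p+2$. I clear the denominator $2p^2$ and isolate the square root; after collecting terms the inequality is seen to be equivalent to
$$2\sqrt{2}\,p^3-5p^2+4p-2 \;\leq\; \sqrt{8p^6-20p^5+17p^4-12p^3+20p^2-16p+4}.$$
For $p\geq 3$ the left-hand side is positive (the cubic term already dominates at $p=3$, where it evaluates to $54\sqrt{2}-35>0$), so both sides are nonnegative and I may square them.

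After squaring, the difference of the radicand and the squared left-hand side has vanishing $p^6$, $p^1$ and $p^0$ coefficients, and collapses to $p^2$ times the cubic
$$20(\sqrt{2}-1)p^3-(8+16\sqrt{2})p^2+(28+8\sqrt{2})p-16.$$
It then remains to check that this cubic is nonnegative for every prime $p\geq 3$. Its leading coefficient $20(\sqrt{2}-1)$ is positive, its value at $p=3$ is positive (about $50$), and its derivative is an upward parabola whose vertex lies below $3$ and which is positive at $p=3$; hence the cubic is increasing on $[3,\infty)$ and stays positive there.

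I expect this final cubic positivity check to be the main, though minor, obstacle, together with the purely algebraic verification that the expanded polynomials match the stated closed form exactly; everything else is a direct substitution into Theorem~\ref{ubmain} followed by a valid squaring step.
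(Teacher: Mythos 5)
Your proposal is correct and takes essentially the same route as the paper: both parts obtain the first bound by substituting $n=2$, $k=p$ and $r_p(\mathbb{F}_p^2)=(p-1)^2$ into Theorem~\ref{ubmain}, and both prove the second bound by comparing the square root against a polynomial with leading term $2\sqrt{2}\,p^3$ and squaring. The only difference is bookkeeping: the paper bounds the radicand from below by the two-term square $\left(2\sqrt{2}\,p^3-\tfrac{5}{\sqrt{2}}p^2\right)^2$ and then finishes with term-by-term estimates, whereas you square the exact equivalent inequality $2\sqrt{2}\,p^3-5p^2+4p-2\leq\sqrt{\;\cdot\;}$ and verify positivity of the resulting cubic $20(\sqrt{2}-1)p^3-(8+16\sqrt{2})p^2+(28+8\sqrt{2})p-16$ on $[3,\infty)$ --- both verifications are routine and your monotonicity argument for the cubic is sound.
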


For other dimensions, there is the lower bound $r_p(\mathbb{F}_p^{2p})\geq p(p-1)^{2p-1}$ due to Frankl et al.~\cite{FGR}, using large sunflower-free sets.

We found a $70$ point $5$-progression-free set in $\mathbb{F}_5^3$ via a branch and cut approach (see Figure \ref{fig70}) and we will show the following upper bounds for small primes.

\begin{theorem}\label{thm-r5}
$r_5(\mathbb{F}_5^3)<74.$
\end{theorem}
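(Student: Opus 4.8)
The plan is to start from Corollary~\ref{cor-upp}, whose precise bound evaluates to about $74.49$ and hence already gives $r_5(\mathbb{F}_5^3)\le 74$. It therefore suffices to rule out the existence of a line-free set $S\subseteq\mathbb{F}_5^3$ with $|S|=74$, and I would do this by contradiction: extract rigidity from such an $S$ via a second-moment count over all affine planes, and then eliminate the few surviving configurations by a structural analysis.

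First I would set up the double count over planes. In $\mathbb{F}_5^3$ there are $31=\tfrac{5^3-1}{5-1}$ directions, each giving a partition into $5$ parallel planes, for $155$ affine planes in total; every point lies on $31$ of them and every pair of points lies on $6$. Writing $h_H=|S\cap H|$, line-freeness of each planar section together with the planar bound $r_5(\mathbb{F}_5^2)=16$ gives $h_H\le 16$, while double counting points and pairs yields the exact identities $\sum_H h_H=31\cdot 74$ and $\sum_H\binom{h_H}{2}=6\binom{74}{2}$, whence $\sum_H h_H^2=31\cdot 74+6\cdot 74\cdot 73=34706$.

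Next I would convert this into a statement about deficiencies. In each direction the five planar sizes sum to $74$ and are each at most $16$, so the deficiencies $d_i=16-h_i\ge 0$ satisfy $\sum_i d_i=6$; a short computation gives $\sum_i h_i^2=1088+\sum_i d_i^2$ per direction, and summing over the $31$ directions turns the identity above into $\sum_{\mathrm{dir}}\sum_i d_i^2=34706-31\cdot 1088=978$. Since a nonnegative integer vector of length $5$ summing to $6$ has $\sum_i d_i^2\le 36$, attained only by $(6,0,0,0,0)$ and with next largest value $26$, the inequality $978\le 36m+26(31-m)$ forces $m\ge 18$. Thus at least $18$ of the $31$ directions have four maximal ($16$-point) planar sections and one section of exactly $10$ points.

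Finally I would exploit this rigidity. Having $\ge 18$ maximal line-free subsets of $\mathbb{F}_5^2$ appear as sections in pairwise non-parallel directions is extremely restrictive: any two such planes meet in a line $\ell$ with $|S\cap\ell|\le 4$, and the $16$-point line-free sets of $\mathbb{F}_5^2$ form a rigid, well-understood family (up to the affine group there are only finitely many, the basic example being the complement $\{1,2,3,4\}^2$ of two lines). I would combine this classification with the cross-direction line constraints to show that so many maximal sections cannot be glued together consistently while keeping $S$ line-free, contradicting $|S|=74$. The hard part will be exactly this last step: the moment identity only pins down the global deficiency, so all the genuinely combinatorial work lies in controlling how many essentially different maximal planar sections there are and how they may share lines. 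Making the elimination airtight may well require a finite, possibly computer-assisted, case check rather than a one-line argument.
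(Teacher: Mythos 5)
Your opening reduction and your second-moment computation are all correct: Corollary~\ref{cor-upp} at $p=5$ evaluates to $\tfrac{3993-\sqrt{72049}}{50}\approx 74.49$, so $r_5(\mathbb{F}_5^3)\le 74$ and only the value $74$ must be excluded (monotonicity of line-freeness under taking subsets would also give this reduction); the identities $\sum_H h_H=31\cdot 74$ and $\sum_H\binom{h_H}{2}=6\binom{74}{2}$ hold; and your deficiency argument correctly forces at least $18$ of the $31$ parallel classes to have section sizes $\{10,16,16,16,16\}$. The genuine gap is that the proof ends there. The counting information you have assembled is provably insufficient to produce a contradiction: for example, the class counts $(a,b,c,d,e)=(26,0,0,1,4)$ for the size patterns $\{10,16^4\}$, $\{11,15,16^3\}$, $\{14^3,16^2\}$, $\{14^2,15^2,16\}$, $\{14,15^4\}$ satisfy both of your identities ($26+1+4=31$ and $525\cdot 26+512+511\cdot 4=16206$) and have $a=26\ge 18$, so no contradiction can follow from these constraints alone. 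Everything therefore hinges on your final step---that $18$ pairwise non-parallel maximal sections ``cannot be glued together''---and this is exactly what you do not prove: you give no classification of the $16$-point line-free subsets of $\mathbb{F}_5^2$, no precise statement of the cross-line constraints, and you yourself concede the step may require a computer-assisted case analysis. As written, this is a plan for a proof, not a proof.

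For comparison, the paper closes the argument not by gluing planar sections but by a weighted count of $4$-point lines, which needs two extra inputs. First, $r_4(\mathbb{F}_5^2)=11$: since the six planes through any $4$-line contain $70+6\cdot 4=94$ points of $S$ in total, each such plane has at least $94-5\cdot 16=14$ points; combined with $r_4(\mathbb{F}_5^2)=11$ this excludes planes of exactly $12$ or $13$ points and shows the six plane sizes around any $4$-line form the multiset $\{14,16,16,16,16,16\}$ or $\{15,15,16,16,16,16\}$. Second, in-plane double counts: a $16$-point plane contains at least twelve $4$-lines, while a plane of $m\in\{14,15\}$ points contains at most $m$ of them. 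Writing $A,B,C$ for the incidences of $4$-lines with $16$-, $15$-, $14$-point planes, the multiset statement gives the exact relation $A-2B-5C=0$, and the in-plane bounds give $A\geq 48a+36b+24c+12d$, $B\leq 15b+30d+60e$, $C\leq 42c+28d+14e$; together with the two counting identities this integer system is infeasible. If you want to complete your write-up, the shortest repair is to replace your gluing step by this $4$-line count; your conclusion $a\ge 18$ then becomes a correct but unnecessary strengthening, since the paper's system is infeasible without it.
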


\begin{theorem}\label{thm-r7}
$r_7(\mathbb{F}_7^3)<243. $
\end{theorem}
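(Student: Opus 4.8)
The plan is to push the slicing/averaging method behind Theorem~\ref{ubmain} and Corollary~\ref{cor-upp} one integer further in the single case $p=7$. First I would recall what the general machinery already delivers: fix one direction and write a line-free $S\subseteq\mathbb{F}_7^3$ as a union of seven parallel plane-slices $S_0,\dots,S_6$ with sizes $a_0,\dots,a_6$. Each slice is a line-free subset of a plane, so $a_i\le r_7(\mathbb{F}_7^2)=36$ by Jamison \cite{Jamison} and Brouwer--Schrijver \cite{BS}. Counting incidences of $S$ with the $7^4=2401$ transversal lines (the full lines meeting every slice exactly once), together with the constraint that each such line, being a full line, contains at most $6$ points of $S$, produces the first- and second-moment relations between $s=|S|$, $\sum a_i$ and $\sum a_i^2$ that underlie Corollary~\ref{cor-upp}. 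Evaluating the resulting quadratic at $p=7$, $R=36$ places its relevant root just below $244$, so this already forces $s\le 243$. The entire new content of Theorem~\ref{thm-r7} is therefore to \emph{exclude} the value $s=243$.

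To do that I would run a stability analysis of the equality case. If $s=243$, then across seven slices with $\sum a_i=243$ and each $a_i\le 36$ the sizes are extremely close to the maximum $7\cdot 36=252$, so at least five slices must have size exactly $36$ and the remaining deficit $9$ is tightly constrained. Moreover, attaining $s=243$ forces the moment inequalities used above to be nearly tight, which pins down the transversal-incidence distribution (nearly every transversal line must carry exactly $6$ points of $S$) and forces the full slices to be \emph{extremal} line-free planar sets. Using the characterization that a $36$-point line-free set in $\mathbb{F}_7^2$ is an affine image of the $6\times 6$ grid, I would then try to show that no admissible stacking of such grids over the seven slices can simultaneously (i) avoid every transversal line and (ii) respect the vertical-line constraint that each of the $49$ columns meets $S$ in at most $6$ points, while still reaching $243$. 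The contradiction should come from integrality: summing column counts and the forced near-saturation of transversal lines overdetermines the configuration.

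The most realistic concrete route, consistent with the branch-and-cut computation used for Theorem~\ref{thm-r5}, is to encode the problem directly: introduce a $0/1$ variable per point of $\mathbb{F}_7^3$, impose for each of the finitely many lines the constraint that at most $6$ of its points are chosen, and certify $s\le 242$ either by a feasible dual solution to the linear-programming relaxation (a nonnegative weighting of the line inequalities whose value is below $243$) or by a verified branch-and-cut search. The hard part in the purely combinatorial approach is the stability step: the extremal planar sets being grids does not by itself forbid a $243$-point configuration, so one must classify precisely which slices fail to be full and rule out every near-extremal combination, a delicate case analysis in which the parity and $\le 6$-per-column conditions must be exploited. In the computational approach the difficulty instead shifts to producing a certificate that is rigorously verifiable rather than merely the output of a solver.
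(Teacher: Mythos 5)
Your opening reduction is sound: evaluating the first bound of Corollary~\ref{cor-upp} at $p=7$ (using $r_7(\mathbb{F}_7^2)=36$) gives $r_7(\mathbb{F}_7^3)\leq 243.58\ldots$, so the whole content of Theorem~\ref{thm-r7} is indeed to exclude a configuration of exactly $243$ points. But your proposal contains no working mechanism for that exclusion, and both routes you sketch fail. The stability route begins with a false claim: seven slice sizes $a_i\leq 36$ summing to $243$ need not include five slices of size $36$ (take $35+35+35+35+35+34+34=243$); likewise nothing forces transversal lines to be near-saturated, since $243$ is far below the trivial transversal bound $49\cdot 6=294$ (the average transversal line carries only $243/49\approx 4.96$ points); and the ``affine image of the $6\times 6$ grid'' characterization of extremal planar line-free sets is asserted, not established --- the paper neither proves nor uses any such classification. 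The computational fallback is worse: the LP relaxation of the system ``at most $6$ points per line'' admits the constant feasible point $x_v=6/7$, of value $343\cdot\tfrac{6}{7}=294$, so by weak duality \emph{no} nonnegative weighting of the line inequalities can certify a bound below $294$, let alone $242$; only integrality (branch-and-cut) could help, and you have not carried out, nor argued the feasibility of, that computation.

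The paper's actual proof is a finite counting argument that neither of your routes approximates, and it is the missing key idea. Assume $|S|=243$ and call a line with exactly $6$ points of $S$ a $6$-line. The only computer-checked ingredient is the planar fact $r_6(\mathbb{F}_7^2)=29$, which implies that every plane with at least $30$ points contains a $6$-line. Summing over the $8$ planes through a $6$-line gives $(243-6)+8\cdot 6=285$, so each of those planes has at least $285-7\cdot 36=33$ points; consequently no plane of $\mathbb{F}_7^3$ holds $30$, $31$ or $32$ points, and the planes through any $6$-line have size multiset $\{33,36^7\}$, $\{34,35,36^6\}$ or $\{35^3,36^5\}$. Double counting lines, point--line and pair--line incidences inside a single plane shows that a $36$-point plane contains at least $18$ $6$-lines, while $35$-, $34$-, $33$-point planes contain at most $33$, $30$, $28$ $6$-lines, respectively. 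Plugging these bounds into the count of pairs ($6$-line, plane containing it), together with the distribution of plane sizes over the $57$ parallel classes and the identity that every pair of points of $S$ lies in exactly $8$ planes, produces a system of linear equations and inequalities with no solution --- precisely the scheme of Theorem~\ref{thm-r5} with $6$-lines in place of $4$-lines. To repair your write-up you would need to supply this (or an equivalent) counting argument; neither near-extremal slice analysis nor LP duality can substitute for it.
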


One can use the product $S_1\times S_2$ of two line-free sets $S_1\in \mathbb{F}_p^{n_1}$, $S_2\in \mathbb{F}_p^{n_2}$ to get a line-free set in the higher dimension $n_1+n_2$. This construction also provides us the lower bound $|S_1|^{1/{n_1}}$ for $\alpha_p:=\lim\limits_{n \rightarrow \infty}(r_p(\mathbb{F}_p^n))^{1/n}$ and therefore the asymptotic lower bound $(|S_1|^{1/{n_1}}-o(1))^n$ for $r_p(\mathbb{F}_p^n)$ (see e.g. \cite{SET}, \cite{Pach22}).
The strongest known lower bound for general $p$ is  $\alpha_p\geq p^{1/{2p}}(p-1)^{(2p-1)/{2p}}$ using the results of Frankl et al.~\cite{FGR}, however for small primes the new three-dimensional lower bounds $r_5(\mathbb{F}_5^3)\geq 70$ and  $r_7(\mathbb{F}_7^3)\geq 225$ give better lower bounds, namely, $\alpha_5\geq 4.121$  and $\alpha_7\geq 6.082$.

We will also show the following explicit lower bound for arbitrary dimension (see Table \ref{table:1} for comparisons).

\begin{theorem}
\label{LBhighn}
Let $p\geq 3$ be a prime, then 
$r_p(\mathbb{F}_p^n)\geq(p-1)^n+\frac{n-2}{2}(p-1)(p-2)^{n-3}.$
\end{theorem}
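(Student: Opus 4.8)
The plan is to exhibit an explicit line-free set by induction on $n$, organised through the decomposition $\mathbb{F}_p^n=\mathbb{F}_p^{n-1}\times\mathbb{F}_p$ into $p$ parallel hyperplanes (call them \emph{layers}) indexed by the last coordinate. The starting observation is that a set $S$ with layers $S_c\subseteq\mathbb{F}_p^{n-1}$ is line-free exactly when (i) every layer $S_c$ is line-free in $\mathbb{F}_p^{n-1}$, since a line with last-coordinate direction $0$ lives inside a single layer, and (ii) no \emph{transversal} line is completed, i.e.\ there is no direction $d\in\mathbb{F}_p^{n-1}$ and base point $u$ with $u+cd\in S_c$ for all $c\in\mathbb{F}_p$ (the case $d=0$ forcing $\bigcap_c S_c=\varnothing$), since a line with nonzero last-coordinate direction meets each layer once. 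The base case is $n=2$, where the cube $\{0,\dots,p-2\}^2$ is line-free of size $(p-1)^2$ and meets the bound exactly because the correction term vanishes. For the inductive step I would place, after suitable affine shifts, a copy of the $(n-1)$-dimensional construction in $p-2$ of the layers, a full cube $\{0,\dots,p-2\}^{n-1}$ in one further layer, and a thin family of $\tfrac{p-1}{2}(p-2)^{n-3}$ carefully positioned extra points in the last layer.

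Summing the three contributions gives the recursion
\[
f(n)=(p-2)f(n-1)+(p-1)^{n-1}+\tfrac12(p-1)(p-2)^{n-3},
\]
where $f(n)$ is the size of the construction. Unrolling this from $f(2)=(p-1)^2$ telescopes precisely to $(p-1)^n+\tfrac{n-2}{2}(p-1)(p-2)^{n-3}$, which is the claimed bound; note that $\tfrac{p-1}{2}$ is an integer since $p$ is odd, so the number of extra points placed at each step is a genuine nonnegative integer. Hence the size bookkeeping is routine once the layers have been specified, and the entire content of the theorem is the verification that the specified set is line-free.

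All the difficulty lies in condition (ii). The within-layer condition (i) is immediate: the cube and the inductively constructed layer are line-free, and I would keep the thin extra layer structured (for example symmetric under $t\mapsto-t$) so that it too contains no line. The hard part will be excluding transversal lines. I would first argue, exactly as for the plain hypercube, that a transversal can survive only if its direction $d$ is supported on the coordinates carrying the special value $p-1$, which drastically restricts the candidate directions; then choose the affine shifts of the $p-2$ repeated layers and of the cube layer so that every arithmetic progression $(u+cd)_c$ of layer-points is driven through a deleted point in at least one layer; and finally check that the $\tfrac{p-1}{2}(p-2)^{n-3}$ extra points in the last layer can be positioned so that the progressions meeting them are already blocked elsewhere. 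The principal obstacle is propagating this blocking property through the induction, so that the shifts fixed at dimension $n-1$ stay compatible after being copied into the $p-2$ layers at dimension $n$; it is here that the exact coordinates of the extra points, and the appearance of the factor $\tfrac{p-1}{2}$ (reminiscent of pairing $\{t,-t\}$ in $\mathbb{F}_p$), must be pinned down.
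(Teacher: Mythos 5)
Your structural skeleton does match the paper's construction when the latter is read off layer by layer along one coordinate: the paper's set, restricted to the hyperplanes $x_1=c$, consists of $p-2$ identical (unshifted) copies of the $(n-1)$-dimensional construction for $c\in[0,p-3]$, one layer of size $(p-1)^{n-1}$ at $c=p-2$, and one thin layer of $\frac{p-1}{2}(p-2)^{n-3}$ points at $c=p-1$; and your recursion $f(n)=(p-2)f(n-1)+(p-1)^{n-1}+\tfrac12(p-1)(p-2)^{n-3}$ indeed unrolls to the claimed bound. But what you have written is a plan, not a proof. The whole content of the theorem is the explicit specification of the layers together with the verification that no transversal line survives, and exactly this part is deferred in your text (``I would first argue\ldots then choose the affine shifts\ldots and finally check\ldots''), with the compatibility of the shifts through the induction admitted to be unresolved. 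Since the statement is purely an existence-by-construction result, this is not routine detail that can be left out; it is the theorem.

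Moreover, the one concrete choice you do make cannot work as stated. If the size-$(p-1)^{n-1}$ layer is literally the cube $[0,p-2]^{n-1}$ and the $p-2$ repeated layers are copies of the lower-dimensional construction, then any point $u\in[0,p-3]^{n-3}\times\{(i,i)\mid i\in[0,\tfrac{p-3}{2}]\}$ lies in every one of the $p$ layers: it lies in each copy of the $(n-1)$-dimensional construction (there it belongs to the product of the box $[0,p-3]^{n-3}$ with the square $[0,p-2]^2$), it lies in the cube, and points of exactly this form are what the thin layer must consist of, so the coordinate line through $u$ in the layer direction is entirely contained in the set. The paper avoids precisely this failure by not using a cube: its full-size layer is $[0,p-2]^{n-3}\times B$, where $B$ is a diagonal-free rearrangement of the square with the same cardinality $(p-1)^2$, chosen so that this layer avoids all points whose last two coordinates agree --- i.e.\ the points the thin layers are built from. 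Your ``suitable affine shifts'' would have to reproduce some equivalent of this twist, and nothing in the sketch indicates how, nor why such shifts propagate consistently through the induction. So the gap is genuine: the counting is right, but the construction itself, which is the actual mathematical content, is missing, and the hinted-at instantiation is incorrect.
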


\section{Related results}

\begin{itemize}
    \item Davis and Maclagan \cite{SET} studied the card game SET, where the cards can be described as points in $\mathbb{F}_3^4$ and one is interested whether the displayed cards form a cap set. The best lower bound for cap sets prior to the work of Romera-Paredes et al.~\cite{DeepMind} was due to Tyrell \cite{Tyrrell}, building on the construction of Edel \cite{Edel}. Naslund~\cite{naslund} announced the improvement $\alpha_3\geq 2.2208$. Elsholtz and Lipnik \cite{ElsholtzLipnik} and Elsholtz and Pach \cite{ElsholtzPach} studied cap sets in other spaces than $\mathbb{F}_3^n$.

    \item Croot et al.~\cite{CrootLevPach} gave an upper bound for $3$-progression-free sets in $\mathbb{Z}_4^n$ that is exponentially smaller than $4^n$. Their method also led to the result of Ellenberg and Gijswijt \cite{EG}. Petrov and Pohoata \cite{PetrovPohoata} gave an improved upper bound for $3$-progression-free sets in $\mathbb{Z}_8^n$, Pach and Palincza \cite{PachPalincza} gave both upper and lower bounds for $6$-progression-free sets in $\mathbb{Z}_6^n$. Elsholtz et al.~\cite{EKL} studied the general case of $k$-progression-free sets in $\mathbb{Z}_m^n$. An overview on known bounds is given by Pach \cite{Pach22}.

    \item Moser \cite{MoserProblem} asked for the maximal size of a subset of $\{1,2,\dots,k\}^n$ without a geometric line. Similarly, Hales and Jewett asked for a subset without a combinatorial line. The result of Furstenberg and Katznelson \cite{FK} also known as the density Hales–Jewett theorem implies that in both cases these sets have to be asymptotically smaller than $k^n$ as $n$ tends to infinity. Polymath \cite{PolymathMoser} gave some explicit bounds for special cases.

    \item Sets that intersect every affine subspace of codimension $s$ are called $s$-blocking sets. The complement of a line-free set in a finite $n$-dimensional affine space is therefore also called an $(n-1)$-blocking set. It is known that the union of any $n$ independent lines intersecting in a single point form a $1$-blocking set in $\mathbb{F}_p^n$ which is optimal (see e.g. \cite{AlonTFHA}, \cite{BS}, \cite{Jamison}). However, for $(n-1)$-blocking sets, the union of $n$ independent hyperplanes, which seems to be the obvious algebraic construction, are not optimal, as will be shown in this paper.  Bishnoi et al.~\cite{BDGP} gave several upper bounds for the size of $s$-blocking sets.

\end{itemize}

\section{Notation}
We write $\mathbb{Z}_n$ for $\mathbb{Z}/n\mathbb{Z}$ and $\mathbb{F}_p=\mathbb{Z}_p$ is the field with $p$ elements whenever $p$ is a prime.

We write $[k,\ell]$ for the set $\{k,k+1,\dots,\ell\}$ either as a subset of $\mathbb{Z}$ or of $\mathbb{F}_p$.

We use both row and column vectors for the elements of $\mathbb{F}_p^n$ and we call these elements points.

Given a subset $S\subseteq \mathbb{F}_p^3$ we call the image of $S\cap (\{j\}\times\mathbb{F}_p^2)$ under the projection $\phi\colon\mathbb{F}_p^3\longrightarrow\mathbb{F}_p^2$, $(a,b,c)\mapsto\left(b,c\right)$ the $j$-layer of $S$.

\section{Proofs  of the upper bounds}

\begin{proof}[Proof of Theorem \ref{ubmain}]
Let $A\subseteq \mathbb{F}_p^{n+1}$ be $k$-progression-free with $|A|=r_k(\mathbb{F}_p^{n+1})$. We count the number of the point pairs on every $n$-dimensional affine hyperplane $$s = \left| \{ (\{a,b\},S)\ |\ a,b\in A,\ a\ne b,\ a,b\in S,\  \text{$S$ is an $n$-dimensional affine hyperplane}\}\right| .$$ On every hyperplane, the number of points is at most $r_k(\mathbb{F}_p^n)$. Firstly, we assume $r_k(\mathbb{F}_p^{n+1})\geq(p-1)r_k(\mathbb{F}_p^n)$, then the sum of number of point pairs for $p$ parallel hyperplanes is maximal if there are $p-1$ hyperplanes with $r_k(\mathbb{F}_p^n)$ points and one with $r_k(\mathbb{F}_p^{n+1})-(p-1)r_k(\mathbb{F}_p^n)$. 

There are $\frac{p^{n+1}-1}{p-1}$ disjoint sets of parallel hyperplanes, so
\begin{equation*} s\leq\Big(\frac{p^{n+1}-1}{p-1}\Big)\Big((p-1)\binom{r_k(\mathbb{F}_p^n)}{2}+
\binom{r_k(\mathbb{F}_p^{n+1})-(p-1)r_k(\mathbb{F}_p^{n})}{2}\Big).
\end{equation*}
Note here that this inequality still holds if $r_k(\mathbb{F}_p^{n+1})<(p-1)r_k(\mathbb{F}_p^n)$, as in this case the number of point pairs is clearly less than $$(p-1)\binom{r_k(\mathbb{F}_p^n)}{2}$$ and $$\binom{r_k(\mathbb{F}_p^{n+1})-(p-1)r_k(\mathbb{F}_p^{n})}{2}\geq 0.$$

On the other hand, every point pair defines a line that is included in exactly $\frac{p^n-1}{p-1}$ $n$-dimensional affine hyperplanes, so $$s=\frac{p^n-1}{p-1}\binom{r_k(\mathbb{F}_p^{n+1})}{2}.$$

We get the quadratic inequality $$p^n\big(r_k(\mathbb{F}_p^{n+1})\big)^2-\big(p^n+2(p^{n+1}-1)r_k(\mathbb{F}_p^{n})\big)r_k(\mathbb{F}_p^{n+1})+(p^{n+2}-p)\big(r_k(\mathbb{F}_p^{n})\big)^2\geq 0$$ with roots $$\frac{2(p^
{n+1}-1)r_k(\mathbb{F}_p^n)+p^n\pm\sqrt{4(p^
{n+1}-1)r_k(\mathbb{F}_p^n)(p^n-r_k(\mathbb{F}_p^n))+p^{2n}}}{2p^n}.$$

As $${r_k(\mathbb{F}_p^{n+1})}\leq p(r_k(\mathbb{F}_p^n))$$ but 
\begin{equation*}
\begin{split}   
 & \frac{2(p^{n+1}-1)r_k(\mathbb{F}_p^n)+p^n+\sqrt{4(p^
{n+1}-1)r_k(\mathbb{F}_p^n)(p^n-r_k(\mathbb{F}_p^n))+p^{2n}}}{2p^n} \\
 > \; &  p (r_k(\mathbb{F}_p^n))+\frac{1}{2}-\frac{r_k(\mathbb{F}_p^n)}{p^n}+\frac{\sqrt{p^{2n}}}{2p^n}\geq p (r_k(\mathbb{F}_p^n))+\frac{1}{2}-1+\frac{1}{2}= p (r_k(\mathbb{F}_p^n)),
\end{split}
\end{equation*}

the theorem follows.
\end{proof}

\begin{proof}[Proof of Corollary \ref{cor-upp}]
The first statement follows immediately from Theorem \ref{ubmain} using $r_p(\mathbb{F}_p^2)=(p-1)^2$. For the second statement we are using that $8p^6-20p^5+17p^4-12p^3+20p^2-16p+4$ can be bounded by $(2\sqrt{2}p^3-5/\sqrt{2}p^2)^2$ from below for $p\geq 3$ and we get 
\begin{equation*}
\begin{split}   
r_p(\mathbb{F}_p^3)\leq & \; p^3-2p^2+p-\frac{1}{2}+\frac{2}{p}-\frac{1}{p^2}-\sqrt{2}p+\frac{5}{2 \sqrt{2}} \\ \leq & \;  p^3-2p^2-(\sqrt{2}-1)p-\frac{1}{2}+\frac{2}{3}+\frac{5}{2 \sqrt{2}} \\ \leq  & \; p^3-2p^2-(\sqrt{2}-1)p+2.
\end{split}
\end{equation*}
\end{proof}

\begin{proof}[Proof of Theorem \ref{thm-r5}]
Assume that $S\subseteq \mathbb{F}_5^3$ is a $5$-progression-free set of size $74$. We will compute a weighted sum over all lines containing $4$ points to reach a contradiction.

Let us call a line containing exactly $r$ points an $r$-line. Let $l$ be a $4$-line in $S$ and let $H_1,H_2,\dots,H_6$ be the planes containing $l$. Then $\sum_{i=1}^6|H_i\cap S|=(74-4)+6\cdot 4=94$. Note that $r_5(\mathbb{F}_5^2)=16$ and $r_4(\mathbb{F}_5^2)=11$, which can be easily checked by computer search. Therefore, $|H_i\cap A|\geq 94-5\cdot 16=14$ for all $i$ and there is no plane in $\mathbb{F}_5^3$ containing $12$ or $13$ points.
Hence, there are five different distributions for the number of points in five parallel planes:
\begin{itemize}
    \item [(a)]$\{10,16,16,16,16\}$
    \item [(b)]$\{11,15,16,16,16\}$
    \item [(c)]$\{14,14,14,16,16\}$
    \item [(d)]$\{14,14,15,15,16\}$
    \item [(e)]$\{14,15,15,15,15\}.$
\end{itemize}
Denote by $a,b,c,d,e$ the number of classes of parallel planes having these distributions. Note that 
\begin{equation}\label{eq-sum}
a+b+c+d+e=31.
\end{equation}

If we compare the number of pairs of points in each plane with the total number of pairs we get $(\binom{10}{2}+4\binom{16}{2})a+(\binom{11}{2}+\binom{15}{2}+3\binom{16}{2})b+(3\binom{14}{2}+2\binom{16}{2})c+(2\binom{14}{2}+2\binom{15}{2}+\binom{16}{2})d+(\binom{14}{2}+4\binom{15}{2})e=6\binom{74}{2}$

\begin{equation}\label{eq-pairs}
\Leftrightarrow 525a+520b+513c+512d+511e=16206,
\end{equation}
since each pair lies in exactly six planes.

Now denote by $A$, $B$, and $C$ the number of pairs $(\ell,H)$ where $H$ is a hyperplane containing $16$, $15$ and $14$ points, respectively and $\ell\subseteq H$ is a $4$-line. Again let $\ell$ be a $4$-line and let $H_1,H_2,\dots,H_6$ be the planes containing $\ell$. Then $$\{\left| H_i\cap S\right| |\;i\in[1,6]\}\in\{\{14,16,16,16,16,16\},\{15,15,16,16,16,16\}\}$$ as multisets and therefore
\begin{equation}\label{eq-main}
A-2B-5C=0
\end{equation}
To bound the size of $A$, $B$ and $C$ we need the following claims.

\begin{claim}
Every plane containing $16$ points contains at least twelve $4$-lines.
\end{claim}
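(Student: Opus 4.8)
The plan is to argue entirely inside the plane $H$, which is an affine plane isomorphic to $\mathbb{F}_5^2$, writing $T=S\cap H$ so that $|T|=16$. Since $r_5(\mathbb{F}_5^2)=16$, these $16$ points form a maximum line-free set in $H$; in particular $T$ contains no full line, so no line of $H$ meets $T$ in all five of its points. For $i\in\{0,1,2,3,4\}$ let $n_i$ be the number of lines of $H$ meeting $T$ in exactly $i$ points; then the quantity to be bounded is $n_4$, and the line-free property records itself as $n_5=0$.

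Next I would write down the three standard double-counting identities in $\mathbb{F}_5^2$, using that this affine plane has $25$ points, $30$ lines, $5$ points on each line, $6$ lines through each point, and exactly one line through each pair of points. Counting lines, incidences between $T$ and lines, and pairs of points of $T$, respectively, gives
\begin{align*}
n_0+n_1+n_2+n_3+n_4 &= 30, \\
n_1+2n_2+3n_3+4n_4 &= 16\cdot 6 = 96, \\
n_2+3n_3+6n_4 &= \binom{16}{2} = 120.
\end{align*}

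From this point the bound is purely arithmetic. The third identity gives $n_2=120-3n_3-6n_4$, and substituting this into the second gives $n_1=3n_3+8n_4-144$. Imposing the trivial inequalities $n_1\geq 0$ and $n_2\geq 0$ yields $3n_3+8n_4\geq 144$ and $3n_3+6n_4\leq 120$; subtracting the second from the first eliminates $n_3$ and leaves $2n_4\geq 24$, that is $n_4\geq 12$, which is exactly the claim.

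I do not expect a genuine obstacle here: once $H$ is identified with $\mathbb{F}_5^2$ and the hypothesis $|H\cap S|=r_5(\mathbb{F}_5^2)$ is used to force $n_5=0$, the statement reduces to a one-line linear-programming argument on the line-type vector $(n_0,\dots,n_4)$ that needs only the non-negativity of $n_1$ and $n_2$. The only thing requiring care is the correct combinatorial bookkeeping of the affine plane. As a sanity check, equality $n_4=12$ forces $n_1=n_2=0$, $n_3=16$ and $n_0=2$, which is realised precisely when the complement of $T$ in $H$ is a union of two intersecting lines (a minimum blocking set of $\mathbb{F}_5^2$); this shows that the bound of twelve $4$-lines is best possible.
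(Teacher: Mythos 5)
Your proof is correct and is essentially the paper's own argument: the same two double counts (point--line incidences giving $n_1+2n_2+3n_3+4n_4=96$, and point pairs giving $n_2+3n_3+6n_4=120$), combined linearly so that non-negativity of $n_1$ and $n_2$ forces $2n_4\geq 24$. The only cosmetic differences are that you carry the unused count of all $30$ lines and append a sharpness remark, whereas the paper simply subtracts the two identities to get $-x_1-x_2+2x_4=24$.
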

\begin{proofofclaim}
Consider a plane $H$ containing $16$ points and let $x_i$ be the number of $i$-lines in $H$ for $i\in\{1,2,3,4\}$. By double counting the points in $H$ we get $x_1+2x_2+3x_3+4x_4=6\cdot16=96$ and by double counting the pairs of points in $H$ we get $x_2+3x_3+6x_4=\binom{16}{2}=120$. By taking the difference of the two equations we get $-x_1-x_2+2x_4=24$, implying that $2x_4\geq 24$.
\end{proofofclaim}

\begin{claim}
For $m\in \{14,15\}$, every plane containing $m$ points contains at most $m$ $4$-lines.
\end{claim}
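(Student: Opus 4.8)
The plan is to reuse the two double-counting identities from the proof of Claim 1, but to supplement them with the \emph{total} number of lines in the plane so as to bound the number of $4$-lines from above rather than below. First I would identify the plane $H$ with the affine plane $\mathbb{F}_5^2$: it has $25$ points and $30$ lines, arranged in six parallel classes of five lines each, with every point on exactly $6$ lines and every pair of points on a unique line. Writing $S'=H\cap S$ for the $m$-point set and letting $x_i$ be the number of lines of $H$ meeting $S'$ in exactly $i$ points, the crucial preliminary observation is that $S'$ contains no full line of $H$, since $S$ is line-free in $\mathbb{F}_5^3$ and every line of $H$ is a line of $\mathbb{F}_5^3$; hence $x_5=0$, and $x_4$ is exactly the quantity to be bounded.

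Next I would record the three counting identities
\begin{align*}
x_0+x_1+x_2+x_3+x_4 &= 30, \\
x_1+2x_2+3x_3+4x_4 &= 6m, \\
x_2+3x_3+6x_4 &= \binom{m}{2},
\end{align*}
obtained respectively from the total line count (using $x_5=0$), from counting point-line incidences, and from counting pairs of points. Subtracting the third identity from the second gives $x_1+x_2=6m-\binom{m}{2}+2x_4$; substituting this into the first identity and discarding the nonnegative terms $x_0$ and $x_3$ yields $30\ge x_1+x_2+x_4=6m-\binom{m}{2}+3x_4$, i.e. $3x_4\le 30-6m+\binom{m}{2}$. It then remains only to substitute the two values of $m$: the right-hand side equals $37$ for $m=14$, forcing $x_4\le 12$, and equals $45$ for $m=15$, forcing $x_4\le 15$, so that $x_4\le m$ in both cases.

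The content of the argument, and the one genuine departure from Claim 1, is the use of the global constraint $\sum_i x_i=30$ together with $x_5=0$: the incidence and pair identities on their own (as in Claim 1) only pin $x_4$ down from below, and it is the bound on the total number of lines that converts this into an upper bound. Accordingly, the step I would be most careful about is justifying $x_5=0$, which is precisely where the hypothesis that $S$ is line-free in three dimensions enters; everything after that is a short linear manipulation followed by two substitutions, so no further obstacle is expected.
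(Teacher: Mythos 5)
Your proof is correct, but it is not the paper's argument for this claim. The paper's proof is purely local: since $5\cdot 3+1=16>m$, a point of $S$ lying on five $4$-lines would force at least $16$ points of $S$ into the plane (the five lines meet only at that point), so every point lies on at most four $4$-lines, and counting incidences between points of $S$ and $4$-lines gives $x_4\le 4m/4=m$. You instead argue globally, adjoining to the two counting identities of the preceding claim the third identity $x_0+x_1+x_2+x_3+x_4=30$ (the total number of lines of the affine plane $\mathbb{F}_5^2$), and your combination amounts to taking (line count) $-$ (incidence count) $+$ (pair count), which yields $x_0+x_3+3x_4=30-6m+\binom{m}{2}$ and hence $3x_4\le 30-6m+\binom{m}{2}$; your arithmetic ($x_4\le 12$ for $m=14$, $x_4\le 15$ for $m=15$) is right, and for $m=14$ your bound is strictly stronger than the claim, which can only help the linear program that follows. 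Two remarks. First, your insistence that $x_5=0$ is the crucial step is overstated: even without it, $x_5$ enters your final identity with the nonnegative coefficient $6$, so the inequality $3x_4\le 30-6m+\binom{m}{2}$ survives; the line-freeness of $S$ is a valid justification in context but is not logically needed for the upper bound. Second, your method is exactly the one the paper itself uses for the analogous claim in the proof of Theorem~\ref{thm-r7}, where the combination $3\cdot(\text{line count})-2\cdot(\text{incidence count})+(\text{pair count})$ gives $6x_6\le 168-16m+\binom{m}{2}$; so the two approaches trade places across the two theorems: the paper's local argument is the shorter route here, while your global triple-count is the one that scales to the $p=7$ case.
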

\begin{proofofclaim}
As $5\cdot3+1=16> m$, every point in $S$ can be contained in at most four $4$-lines and therefore the number of $4$-lines in the plane is bounded from above by $\frac{4m}{4}=m.$
\end{proofofclaim}

Finally combining \eqref{eq-sum}, \eqref{eq-pairs} and \eqref{eq-main} we obtain the following system of linear equations and inequalities.

\begin{equation*}
\begin{split}   
&a+b+c+d+e=31 \\
&525a+520b+513c+512d+511e=16206 \\
&A-2B-5C=0 \\
&A\geq 48a+36b+24c+12d \\
&B\leq 15b+30d+60e \\
&C\leq 42c+28d+14e \\
&a,b,c,d,e,A,B,C\geq 0,
\end{split}
\end{equation*}
which does not have any integral solution, a contradiction to $|S|=74$.

\end{proof}

\begin{proof}[Proof of Theorem \ref{thm-r7}]
Assume that $S\subseteq \mathbb{F}_7^3$ is a $7$-progression-free set of size $243$. Note that we have the following bounds.

\begin{claim}
Every plane containing $36$ points contains at least $18$ $6$-lines and every plane containing $35$, $34$ or $33$ contains at most $33$, $30$, $28$ $6$-lines, respectively.
Moreover, $r_7(\mathbb{F}_7^2)=36$ and $r_6(\mathbb{F}_7^2)=29$.
\end{claim}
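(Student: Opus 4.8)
The plan is to prove each of the claimed bounds by the same double-counting technique used for the $p=5$ case, now adapted to $\mathbb{F}_7^2$ where lines have $7$ points and the relevant threshold is $5\cdot 5 + 1 = 26$, so each point lies on at most five $6$-lines. I would first establish the two extremal values $r_7(\mathbb{F}_7^2)=36$ and $r_6(\mathbb{F}_7^2)=29$ by computer search, exactly as the authors verify $r_5(\mathbb{F}_5^2)=16$ and $r_4(\mathbb{F}_5^2)=11$; these two numbers feed into every subsequent count and I would treat them as the computational input of the claim.

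For the lower bound on $6$-lines in a $36$-point plane, I would mimic the first claim of Theorem \ref{thm-r5}. Let $x_i$ denote the number of $i$-lines in a plane $H$ with $|H\cap S|=36$, for $i\in\{1,\dots,6\}$ (a line has $7$ points but $H$ is $6$-progression-free, so $x_7=0$). Double-counting incidences of points and pairs of points on the $8$ lines through each point gives two equations:
\begin{align*}
x_1+2x_2+3x_3+4x_4+5x_5+6x_6 &= 8\cdot 36, \\
x_2+3x_3+6x_4+10x_5+15x_6 &= \binom{36}{2}.
\end{align*}
Here I am using that each plane in $\mathbb{F}_7^2$ has $(7^2-1)/(7-1)=8$ parallel classes of lines, hence $8\cdot 7 = 56$ lines, and that each point lies on $8$ of them. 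The trick in the $p=5$ argument was to take the difference of the two counts and isolate $x_4$; the analogous step here is to form a suitable integer combination of the two equations that makes the coefficients of $x_1,\dots,x_5$ nonpositive while leaving a positive coefficient on $x_6$, thereby bounding $x_6$ from below by the resulting constant, which I expect to be $18$.

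The upper bounds for planes with $35$, $34$, or $33$ points follow the cheaper counting principle of the second claim in Theorem \ref{thm-r5}: since every point of $S$ lies on at most five $6$-lines, a plane with $m$ points contains at most $\frac{5m}{6}$ such lines, and I would check that $\lfloor 5m/6\rfloor$ equals $33$, $30$, $28$ for $m=35,34,33$ respectively (for instance $5\cdot 35/6 = 29.16\dots$, so this crude bound is \emph{not} tight and the stated values must come from a sharper argument). The main obstacle is precisely this: the naive per-point bound gives numbers that are too small to match the claimed $33,30,28$, so the genuine upper bounds must instead be extracted from a two-equation incidence count on the plane $H$ of size $m$, analogous to the $x_6$ lower bound but solving for an upper estimate under the constraint $x_7=0$ together with the feasibility of the nonnegative solution $(x_1,\dots,x_6)$. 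Concretely I would write the same pair of counting equations with $36$ replaced by $m$, use nonnegativity of the $x_i$ to eliminate variables, and optimise the resulting linear program over $x_6$; pinning down why the optimum lands exactly on $33$, $30$, $28$ — rather than the looser $5m/6$ figure — is the step I expect to require the most care, and it is where the precise value $r_6(\mathbb{F}_7^2)=29$ (bounding the number of points forced onto lines that already carry many points) will have to be invoked.
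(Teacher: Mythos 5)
Your treatment of the first two ingredients matches the paper: the values $r_7(\mathbb{F}_7^2)=36$ and $r_6(\mathbb{F}_7^2)=29$ are indeed just verified by computer search, and your plan for the lower bound in a $36$-point plane is exactly the paper's step --- the combination (pair count) $-\;2\cdot$(point count) has coefficient vector $(-2,-3,-3,-2,0,3)$ on $(x_1,\dots,x_6)$ and constant $\binom{36}{2}-2\cdot 288=54$, so $3x_6\geq 54$ and $x_6\geq 18$. Had the claim consisted only of these parts, your proposal would be essentially the paper's proof.

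The upper bounds for $m\in\{33,34,35\}$, however, contain a genuine gap, in two respects. First, your per-point premise is false: a point of $S$ can lie on \emph{six} $6$-lines, since six $6$-lines through a common point occupy only $5\cdot 6+1=31\leq m$ points of the plane; the $p=5$ analogue worked only because there $5\cdot 3+1=16$ exceeded every admissible $m$. (You noticed the numerical mismatch $\lfloor 5\cdot 35/6\rfloor=29\neq 33$, but misread it: a valid bound of $29$ would be \emph{stronger} than $33$; the real problem is that the premise fails, and the corrected crude bound $6m/6=m$ is too weak.) Second, your fallback --- a linear program built from \emph{only} the point-count and pair-count equations --- provably cannot yield $33,30,28$: for $m=35$ the vector $(x_1,\dots,x_6)=(42,0,0,0,0,\tfrac{119}{3})$ satisfies $x_1+6x_6=280$ and $15x_6=595$ with all entries nonnegative, so those constraints admit $x_6>39$ and no manipulation of them alone can force $x_6\leq 33$. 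The missing idea is a \emph{third} equation: counting all lines of the plane, including those disjoint from $S$, gives $x_0+x_1+\dots+x_6=56$. The paper takes $3\cdot(\text{line count})-2\cdot(\text{point count})+(\text{pair count})$, which yields $3x_0+x_1+x_4+3x_5+6x_6=168-16m+\binom{m}{2}$, hence $x_6\leq\bigl\lfloor\bigl(168-16m+\tbinom{m}{2}\bigr)/6\bigr\rfloor$, i.e.\ $33$, $30$, $28$ for $m=35,34,33$. (Your LP would recover this once the $56$-line equation is added as a constraint, since this is exactly an LP-duality certificate; but without it the bound is simply not implied.) Note also that $r_6(\mathbb{F}_7^2)=29$ plays no role in these upper bounds, contrary to your expectation, and that $x_7=0$ holds because $S$ is $7$-progression-free (line-free), not ``$6$-progression-free'' as written.
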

\begin{proofofclaim}
Consider a plane $H$ containing $m$ points and let $x_i$ be the number of $i$-lines in $H$ for $i\in [0,6]$. There are $56$ lines in the plane and therefore 
\begin{equation}\label{eq-sublin}
x_0+x_1+x_2+x_3+x_4+x_5+x_6=56
\end{equation}
By double counting the points in $H$ we get 
\begin{equation}\label{eq-lin}
x_1+2x_2+3x_3+4x_4+5x_5+6x_6=8m
\end{equation}

and by double counting the pairs of points in $H$ we get 
\begin{equation}\label{eq-qua}
x_2+3x_3+6x_4+10x_5+15x_6=\binom{m}{2}.
\end{equation}
If $m=36$ 
we take the difference of the \eqref{eq-qua} and two times \eqref{eq-lin} and get $-2x_1-3x_2-3x_3-2x_4+3x_6=54$, implying that $x_6\geq 18$.
If $m\in\{33,34,35\}$, then by taking three times \eqref{eq-sublin} minus two times \eqref{eq-lin} plus \eqref{eq-qua} we get $3x_0+x_1+x_4+3x_5+6x_6=168-16m+\binom{m}{2}$ and therefore $6x_6\leq168-16m+\binom{m}{2}$ which gives the desired bounds.

The last two claims can be easily checked by computer search.
\end{proofofclaim}

If we now proceed analogously to the proof of Theorem \ref{thm-r5} we again arrive at a contradiction.
\end{proof}

\section{Proofs of the lower bounds}

\begin{proof}[Proof of Theorem \ref{LBhighn}]
We consider three different types of $2$-dimensional layers: 
\begin{itemize}
    \item $A:=[0,p-2]^{2}$,
    \item $B:=[0,p-1]^2\setminus \{(i,i)\ |\ i \in [0,p-1]\}\setminus \big(\{p-1\}\times [0,\frac{p-3}{2}]\big)\setminus  \big([0,\frac{p-3}{2}]\times \{p-1\}\big)$,
    \item $C:= \{(i,i)\ |\ i \in [0,\frac{p-3}{2}]\}$,
\end{itemize}

and three disjoint subsets of $\mathbb{F}_p^{n-2}$:
\begin{itemize}
    \item $\mathcal{A}:=[0,p-3]^{n-2}$,
    \item $\mathcal{B}:=[0,p-2]^{n-2}\setminus [0,p-3]^{n-2}$,
    \item $\mathcal{C}:=\bigcup_{j\in[1,n-2]}\{x\in\mathbb{F}_p^{n-2} \; | \; (x_j=p-1) \land (x_i\in[0,p-3] \; \forall i\neq j)\}.$
\end{itemize}
We show that $S:=(\mathcal{A}\times A)\cup (\mathcal{B}\times B)\cup (\mathcal{C}\times C) $ is $p$-progression-free.

First consider the case $n=3$.
Let  $L:=\{(a_1,a_2,a_3)+(b_1,b_2,b_3)i \ |\  i\in [0,p-1]\}$ be a $p$-progression in $\mathbb{F}_p^3$ with $a_1,a_2,a_3,b_1,b_2,b_3\in\mathbb{F}_p$.
\begin{itemize}

\item Case 1: $b_1=0$ and $a_1\neq p-2:$

\noindent
$[0,p-2]^2$ is $p$-progression-free and $|\{(i,i)\ |\ i \in [0,\frac{p-3}{2}]\}|<p$, therefore $L$ is not contained in $S$.
\smallskip
\item Case 2: $b_1=0$ and $a_1= p-2:$

\noindent
$L':=\{(a_2,a_3)+(b_2,b_3)i\ |\ i\in [0,p-1]\}$ and $\{(i,i)\ |\ i \in [0,p-1]\}$ are both lines in $\mathbb{F}_p^2$. If they are not parallel or they are equal, they do intersect, and $L$ is not contained in $S$. Otherwise we can rewrite $L'=\{(i,c+i)\ |\ i\in [0,p-1]\}$ with $c\in [1,p-1].$ If $c\in [1,\frac{p-1}{2}]$ then  $c+(p-1)\in[0,\frac{p-3}{2}]$ (choose $i=p-1$) and $(p-2,p-1,c+(p-1))\in L\setminus S.$

Similarly if $c\in [\frac{p+1}{2},p-1]$, then $p-1-c\in[0,\frac{p-3}{2}]$ (choose $i=p-1-c$) and $(p-2,p-1-c,p-1)\in L\setminus S.$ Therefore, $L$ is not contained in $S$.
\smallskip
\item Case 3: $b_1\neq 0$:

\noindent
Without the loss of generality let $b_1=1$ and $a_1=p-2$. If $b_2=b_3=0$ then $L$ is not contained in $S$ because  the $(p-2)$-layer and $(p-1)$-layer of $S$ have no common point.
Otherwise, without the loss of generality, let $b_2\neq 0$ and therefore $\{a_2+b_2i\ |\ i\in [0,p-1]\}=[0,p-1].$ Assume that $L\subseteq S$. Then $a_2=p-1$ and $a_3\in [\frac{p-1}{2},p-2]$ because the $(p-2)$-layer is the only layer containing points with the coordinate $p-1$. Since the  $(p-1)$-layer does not have coordinates in $[\frac{p-1}{2},p-2]$, also $b_3\neq 0$ and consequently  $\{a_3+b_3i\ |\ i\in [0,p-1]\}=[0,p-1].$ As before it follows that $a_3=p-1$ contradicting that $L\subseteq S$. Thus, $L$ is not contained in $S$ and $S$ is $p$-progression-free.
\end{itemize}

Now consider $n>3$. We have already seen that every layer is $p$-progression-free, so we only consider progressions  $L:=\{a+bi \ |\  i\in [0,p-1]\}$ visiting $p$ non-empty layers. Let $m$ be the number of non-zero entries in the first $n-2$ coordinates of $b$. Since only layers of type $C$ are placed where one of the first $n-2$ coordinates is $p-1$ and all layers where two of the first $n-2$ coordinates are $p-1$ are empty, $m$ is also the number of type $C$ layers visited by $L$ and $m\leq p$.

\begin{itemize}
    \item If $m=1$, $L$ is not contained in $S$, analogously to the $3$-dimensional case.

    \item If $m\geq 2$ the last two coordinates of every point in $L$ are equal, since the projection of $L$ in the last two coordinates is a line containing two points in the main diagonal, or it is a single point in the main diagonal. Now since only layers of type $B$ are placed where one of the first $n-2$ coordinates is $p-2$, $L$ also visits a layer of type $B$. Therefore $L$ is not contained in $B$ because layers of type $B$ contain no points on the main diagonal.
\end{itemize}

Finally, note that layers of type $A$ and $B$ contain $(p-1)^2$ points and layers of type $C$ contain $(p-1)/2$ points and thus $$|S|=(p-1)^2(p-1)^{n-2}+\frac{p-1}{2}(n-2)(p-2)^{n-3}=(p-1)^n+\frac{n-2}{2}(p-1)(p-2)^{n-3}.$$

\end{proof}

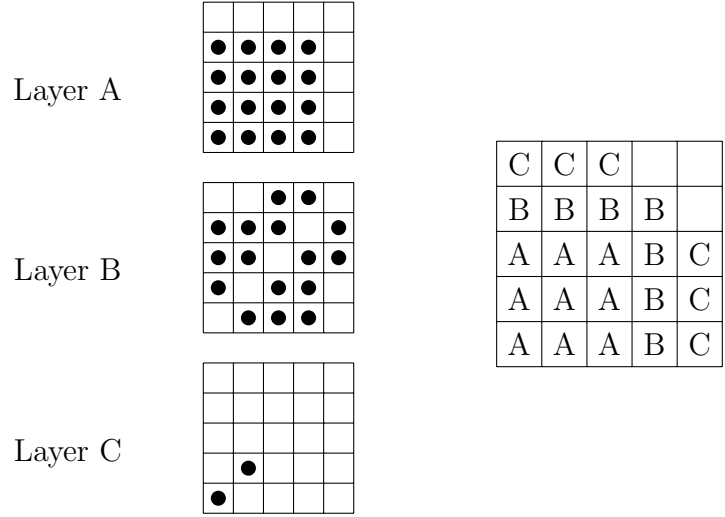
\begin{figure}
\centering
\begin{tikzpicture}[yscale=1,scale=0.5]
\begin{scope}[scale=0.8]
\node (A) at (-5,1.5) {Layer A};
\node (B) at (-5,-4.5) {Layer B};
\node (C) at (-5,-10.5) {Layer C};
\begin{scope}
\draw[step=1.0,black,thin,xshift=-0.5cm,yshift=-0.5cm] (0,0) grid (5,5);

\fill (0,0) circle (0.25);
\fill (0,1) circle (0.25);
\fill (0,2) circle (0.25);
\fill (0,3) circle (0.25);
\fill (1,0) circle (0.25);
\fill (1,1) circle (0.25);
\fill (1,2) circle (0.25);
\fill (1,3) circle (0.25);
\fill (2,0) circle (0.25);
\fill (2,1) circle (0.25);
\fill (2,2) circle (0.25);
\fill (2,3) circle (0.25);
\fill (3,0) circle (0.25);
\fill (3,1) circle (0.25);
\fill (3,2) circle (0.25);
\fill (3,3) circle (0.25);

\end{scope}

\begin{scope}[yshift=-6cm, xshift=0cm]

\draw[step=1.0,black,thin,xshift=-0.5cm,yshift=-0.5cm] (0,0) grid (5,5);
\fill (0,1) circle (0.25);
\fill (0,2) circle (0.25);
\fill (0,3) circle (0.25);
\fill (1,0) circle (0.25);
\fill (1,2) circle (0.25);
\fill (4,2) circle (0.25);
\fill (2,0) circle (0.25);
\fill (2,1) circle (0.25);
\fill (2,3) circle (0.25);
\fill (3,0) circle (0.25);
\fill (2,4) circle (0.25);
\fill (3,2) circle (0.25);
\fill (3,4) circle (0.25);
\fill (3,1) circle (0.25);
\fill (4,3) circle (0.25);
\fill (1,3) circle (0.25);

\end{scope}
\begin{scope}[yshift=-12cm, xshift=0cm]

\draw[step=1.0,black,thin,xshift=-0.5cm,yshift=-0.5cm] (0,0) grid (5,5);
\fill (0,0) circle (0.25);
\fill (1,1) circle (0.25);

\end{scope}
\end{scope}

\begin{scope}[xshift=8cm,yshift=-5.5cm,scale=1.2]
\draw[step=1.0,black,thin,xshift=-0.5cm,yshift=-0.5cm] (0,0) grid (5,5);

\node () at (0,0) {A};
\node () at (0,1) {A};
\node () at (0,2) {A};
\node () at (1,0) {A};
\node () at (1,1) {A};
\node () at (1,2) {A};
\node () at (2,0) {A};
\node () at (2,1) {A};
\node () at (2,2) {A};
\node () at (0,3) {B};
\node () at (1,3) {B};
\node () at (2,3) {B};
\node () at (3,3) {B};
\node () at (3,2) {B};
\node () at (3,1) {B};
\node () at (3,0) {B};
\node () at (4,0) {C};
\node () at (4,1) {C};
\node () at (4,2) {C};
\node () at (0,4) {C};
\node () at (1,4) {C};
\node () at (2,4) {C};

\end{scope}

\end{tikzpicture}
\caption{A description of the line-free set in Theorem~\ref{LBhighn} for $p=5$ and $n=4$.}
\end{figure}

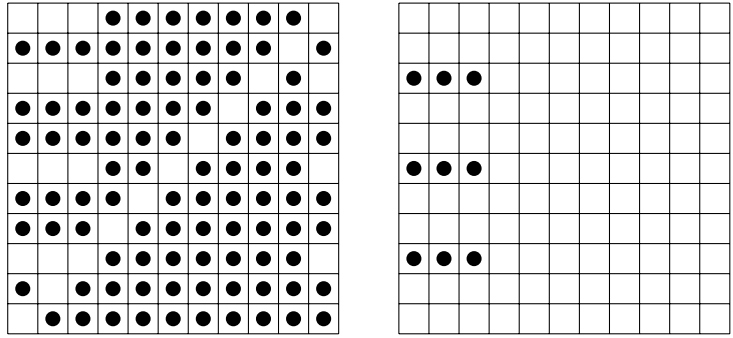
\begin{figure}
\centering
\begin{tikzpicture}[yscale=1,scale=0.4]

\begin{scope}
\draw[step=1.0,black,thin,xshift=-0.5cm,yshift=-0.5cm] (0,0) grid (11,11);

\fill (0,1) circle (0.25);

\fill (0,3) circle (0.25);
\fill (0,4) circle (0.25);

\fill (0,6) circle (0.25);
\fill (0,7) circle (0.25);

\fill (0,9) circle (0.25);

\fill (1,0) circle (0.25);

\fill (1,3) circle (0.25);
\fill (1,4) circle (0.25);

\fill (1,6) circle (0.25);
\fill (1,7) circle (0.25);

\fill (1,9) circle (0.25);

\fill (2,0) circle (0.25);
\fill (2,1) circle (0.25);
\fill (2,3) circle (0.25);
\fill (2,4) circle (0.25);

\fill (2,6) circle (0.25);
\fill (2,7) circle (0.25);

\fill (2,9) circle (0.25);

\fill (3,0) circle (0.25);
\fill (3,1) circle (0.25);
\fill (3,2) circle (0.25);
\fill (3,4) circle (0.25);
\fill (3,5) circle (0.25);
\fill (3,6) circle (0.25);
\fill (3,7) circle (0.25);
\fill (3,8) circle (0.25);
\fill (3,9) circle (0.25);
\fill (3,10) circle (0.25);
\fill (4,0) circle (0.25);
\fill (4,1) circle (0.25);
\fill (4,2) circle (0.25);
\fill (4,3) circle (0.25);
\fill (4,5) circle (0.25);
\fill (4,6) circle (0.25);
\fill (4,7) circle (0.25);
\fill (4,8) circle (0.25);
\fill (4,9) circle (0.25);
\fill (4,10) circle (0.25);
\fill (5,0) circle (0.25);
\fill (5,1) circle (0.25);
\fill (5,2) circle (0.25);
\fill (5,3) circle (0.25);
\fill (5,4) circle (0.25);
\fill (5,6) circle (0.25);
\fill (5,7) circle (0.25);
\fill (5,8) circle (0.25);
\fill (5,9) circle (0.25);
\fill (5,10) circle (0.25);
\fill (6,0) circle (0.25);
\fill (6,1) circle (0.25);
\fill (6,2) circle (0.25);
\fill (6,3) circle (0.25);
\fill (6,4) circle (0.25);
\fill (6,5) circle (0.25);
\fill (6,7) circle (0.25);
\fill (6,8) circle (0.25);
\fill (6,9) circle (0.25);
\fill (6,10) circle (0.25);
\fill (7,0) circle (0.25);
\fill (7,1) circle (0.25);
\fill (7,2) circle (0.25);
\fill (7,3) circle (0.25);
\fill (7,4) circle (0.25);
\fill (7,5) circle (0.25);
\fill (7,6) circle (0.25);
\fill (7,8) circle (0.25);
\fill (7,9) circle (0.25);
\fill (7,10) circle (0.25);
\fill (8,0) circle (0.25);
\fill (8,1) circle (0.25);
\fill (8,2) circle (0.25);
\fill (8,3) circle (0.25);
\fill (8,4) circle (0.25);
\fill (8,5) circle (0.25);
\fill (8,6) circle (0.25);
\fill (8,7) circle (0.25);
\fill (8,9) circle (0.25);
\fill (8,10) circle (0.25);
\fill (9,0) circle (0.25);
\fill (9,1) circle (0.25);
\fill (9,2) circle (0.25);
\fill (9,3) circle (0.25);
\fill (9,4) circle (0.25);
\fill (9,5) circle (0.25);
\fill (9,6) circle (0.25);
\fill (9,7) circle (0.25);
\fill (9,8) circle (0.25);
\fill (9,10) circle (0.25);
\fill (10,0) circle (0.25);
\fill (10,1) circle (0.25);

\fill (10,3) circle (0.25);
\fill (10,4) circle (0.25);

\fill (10,6) circle (0.25);
\fill (10,7) circle (0.25);

\fill (10,9) circle (0.25);

\end{scope}
\begin{scope}[xshift=13cm]

\draw[step=1.0,black,thin,xshift=-0.5cm,yshift=-0.5cm] (0,0) grid (11,11);
\fill (0,2) circle (0.25);
\fill (1,2) circle (0.25);
\fill (2,2) circle (0.25);
\fill (0,5) circle (0.25);
\fill (1,5) circle (0.25);
\fill (2,5) circle (0.25);
\fill (0,8) circle (0.25);
\fill (1,8) circle (0.25);
\fill (2,8) circle (0.25);
\end{scope}

\end{tikzpicture}
\caption{The last two layers of the line-free set in Theorem \ref{LBhighp} for $p=11$.}
\end{figure}

\begin{proof}[Proof of Theorem~\ref{LBhighp}]

Let $k=\lfloor \sqrt{p} \rfloor$, $t=\lfloor p/k \rfloor$, $K:=[0,k-1]$ and $T:=\{jk-1\ |\ j\in [1,t]\}$. Consider the set 
\begin{equation*}
\begin{split}   
S:=&[0,p-3]\times [0,p-2]^2 \\ 
\cup &\{p-2\} \times ([0,p-1]^2 \setminus \{(j,j)\ |\ j\in [0,p-1]\}\setminus ((K\cup \{p-1\})\times (T\cup \{p-1\}))   )\\ 
 \cup  & \{p-1\}\times K\times T.
\end{split}
\end{equation*}
we will show that $S\subseteq \mathbb{F}_p^3$ is $p$-progression-free.

Let  $L:=\{(a_1,a_2,a_3)+(b_1,b_2,b_3)i \ |\  i\in [0,p-1]\}$ be a $p$-progression in $\mathbb{F}_p^3$ with $a_1,a_2,a_3,b_1,b_2,b_3\in\mathbb{F}_p$.
\begin{itemize}

\item Case 1: $b_1=0$ and $a_1\neq p-2:$

\noindent
$[0,p-2]^2$ is $p$-progression-free and $|K\times T|=kt<p$, therefore $L$ is not contained in $S$.

\smallskip
\item Case 2: $b_1=0$ and $a_1= p-2:$

\noindent
$L':=\{(a_2,a_3)+(b_2,b_3)i\ |\ i\in [0,p-1]\}$ and $\{(i,i)\ |\ i \in [0,p-1]\}$ are both lines in $\mathbb{F}_p^2$. If they are not parallel or they are equal, they do intersect, and $L$ is not contained in $S$. Otherwise we can rewrite $L'=\{(i,c+i)\ |\ i\in [0,p-1]\}$ with $c\in [1,p-1].$ $\{(i,c+i)\ |\ i\in [0,k-1]\}\cap (K\times (T\cup \{p-1\}))\neq \emptyset$ and therefore $L$ is not contained in $S$.

\smallskip
\item Case 3: $b_1\neq 0$:

\noindent
Without the loss of generality, let $b_1=1$ and $a_1=p-2$. If $b_2=b_3=0$ then $L$ is not contained in $S$ because  the $(p-2)$-layer and $(p-1)$-layer of $S$ have no common point.
Else, if $b_2\neq 0$ and $b_3 \neq 0$ then $\{a_2+b_2i\ |\ i\in [0,p-1]\}=\{a_3+b_3j\ |\ j\in [0,p-1]\}=[0,p-1].$ Since the $(p-2)$-layer is the only layer with $p-1$ entries but $(p-2,p-1,p-1)\not\in S$, $L$ is not contained in $S$.
Finally, if either $b_2=0$ or $b_3=0$ but not both, one of the last two coordinates is constant, and the other one visits every possible value. Now again the  $(p-2)$-layer is the only layer with $p-1$ entries but the $(p-1)$-layer has empty rows and columns wherever the $(p-2)$-layer has $p-1$ entries and therefore $L$ is not contained in $S$.

\end{itemize}
Note that since $p$ is a prime and $k\geq 2$, $t\leq \frac{p-1}{k}$ and that from the definition of $k$ it follows that 
\begin{equation*}
\begin{split}   
&k\in [\sqrt{p}-1,\sqrt{p}+1]\\
\Leftrightarrow \ &k^2-2\sqrt{p}k+p-1\leq 0\\
\Leftrightarrow \ &k+\frac{p-1}{k}\leq 2\sqrt{p},
\end{split}
\end{equation*}
and therefore $k+t\leq 2\sqrt{p}$.
Hence, 
\begin{equation*}
\begin{split}   
|S|=&(p-2)(p-1)^2+(p^2-p-(kt-1)-k-t)+kt \\
=&(p-2)(p-1)^2+p^2-p+1-k-t \\
\geq & (p-2)(p-1)^2+p^2-p+1-2\sqrt{p} \\
=& (p-1)^3 +p-2\sqrt{p}
\end{split}
\end{equation*}

\end{proof}

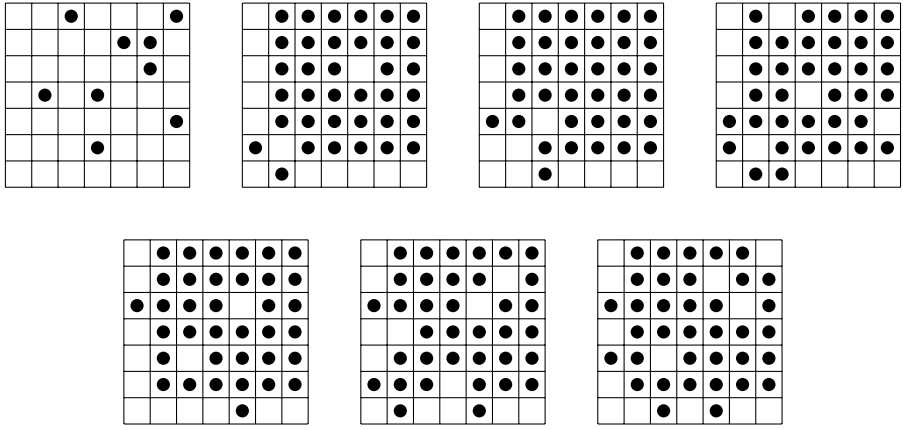
\begin{figure}
\centering
\begin{tikzpicture}[yscale=1,scale=0.35]

\begin{scope}
\draw[step=1.0,black,thin,xshift=-0.5cm,yshift=-0.5cm] (0,0) grid (7,7);

\fill (2,6) circle (0.25);
\fill (6,2) circle (0.25);
\fill (4,5) circle (0.25);
\fill (5,4) circle (0.25);
\fill (1,3) circle (0.25);
\fill (3,1) circle (0.25);
\fill (3,3) circle (0.25);
\fill (5,5) circle (0.25);
\fill (6,6) circle (0.25);

\end{scope}
\begin{scope}[xshift=9cm]

\draw[step=1.0,black,thin,xshift=-0.5cm,yshift=-0.5cm] (0,0) grid (7,7);
\fill (0,1) circle (0.25);
\fill (1,0) circle (0.25);
\fill (1,2) circle (0.25);
\fill (1,3) circle (0.25);
\fill (1,4) circle (0.25);
\fill (1,5) circle (0.25);
\fill (1,6) circle (0.25);
\fill (2,1) circle (0.25);
\fill (2,2) circle (0.25);
\fill (2,3) circle (0.25);
\fill (2,4) circle (0.25);
\fill (2,5) circle (0.25);
\fill (2,6) circle (0.25);
\fill (3,1) circle (0.25);
\fill (3,2) circle (0.25);
\fill (3,3) circle (0.25);
\fill (3,4) circle (0.25);
\fill (3,5) circle (0.25);
\fill (3,6) circle (0.25);
\fill (4,1) circle (0.25);
\fill (4,2) circle (0.25);
\fill (4,3) circle (0.25);
\fill (4,5) circle (0.25);
\fill (4,6) circle (0.25);
\fill (5,1) circle (0.25);
\fill (5,2) circle (0.25);
\fill (5,3) circle (0.25);
\fill (5,4) circle (0.25);
\fill (5,5) circle (0.25);
\fill (5,6) circle (0.25);
\fill (6,1) circle (0.25);
\fill (6,2) circle (0.25);
\fill (6,3) circle (0.25);
\fill (6,4) circle (0.25);
\fill (6,5) circle (0.25);
\fill (6,6) circle (0.25);

\end{scope}
\begin{scope}[xshift=18cm]

\draw[step=1.0,black,thin,xshift=-0.5cm,yshift=-0.5cm] (0,0) grid (7,7);
\fill (0,2) circle (0.25);
\fill (1,2) circle (0.25);
\fill (1,3) circle (0.25);
\fill (1,4) circle (0.25);
\fill (1,5) circle (0.25);
\fill (1,6) circle (0.25);
\fill (2,1) circle (0.25);
\fill (2,0) circle (0.25);
\fill (2,3) circle (0.25);
\fill (2,4) circle (0.25);
\fill (2,5) circle (0.25);
\fill (2,6) circle (0.25);
\fill (3,1) circle (0.25);
\fill (3,2) circle (0.25);
\fill (3,3) circle (0.25);
\fill (3,4) circle (0.25);
\fill (3,5) circle (0.25);
\fill (3,6) circle (0.25);
\fill (4,1) circle (0.25);
\fill (4,2) circle (0.25);
\fill (4,3) circle (0.25);
\fill (4,4) circle (0.25);
\fill (4,5) circle (0.25);
\fill (4,6) circle (0.25);
\fill (5,1) circle (0.25);
\fill (5,2) circle (0.25);
\fill (5,3) circle (0.25);
\fill (5,4) circle (0.25);
\fill (5,5) circle (0.25);
\fill (5,6) circle (0.25);
\fill (6,1) circle (0.25);
\fill (6,2) circle (0.25);
\fill (6,3) circle (0.25);
\fill (6,4) circle (0.25);
\fill (6,5) circle (0.25);
\fill (6,6) circle (0.25);

\end{scope}

\begin{scope}[xshift=27cm]

\draw[step=1.0,black,thin,xshift=-0.5cm,yshift=-0.5cm] (0,0) grid (7,7);
\fill (0,1) circle (0.25);
\fill (0,2) circle (0.25);
\fill (1,0) circle (0.25);
\fill (1,2) circle (0.25);
\fill (1,3) circle (0.25);
\fill (1,4) circle (0.25);
\fill (1,5) circle (0.25);
\fill (1,6) circle (0.25);
\fill (2,0) circle (0.25);
\fill (2,1) circle (0.25);
\fill (2,2) circle (0.25);
\fill (2,3) circle (0.25);
\fill (2,4) circle (0.25);
\fill (2,5) circle (0.25);
\fill (3,1) circle (0.25);
\fill (3,2) circle (0.25);
\fill (3,4) circle (0.25);
\fill (3,5) circle (0.25);
\fill (3,6) circle (0.25);
\fill (4,1) circle (0.25);
\fill (4,2) circle (0.25);
\fill (4,3) circle (0.25);
\fill (4,4) circle (0.25);
\fill (4,5) circle (0.25);
\fill (4,6) circle (0.25);
\fill (5,1) circle (0.25);
\fill (5,2) circle (0.25);
\fill (5,3) circle (0.25);
\fill (5,4) circle (0.25);
\fill (5,5) circle (0.25);
\fill (5,6) circle (0.25);
\fill (6,1) circle (0.25);
\fill (6,3) circle (0.25);
\fill (6,4) circle (0.25);
\fill (6,5) circle (0.25);
\fill (6,6) circle (0.25);

\end{scope}
\begin{scope}[yshift=-9cm, xshift=4.5cm]

\draw[step=1.0,black,thin,xshift=-0.5cm,yshift=-0.5cm] (0,0) grid (7,7);
\fill (0,4) circle (0.25);
\fill (1,1) circle (0.25);
\fill (1,2) circle (0.25);
\fill (1,3) circle (0.25);
\fill (1,4) circle (0.25);
\fill (1,5) circle (0.25);
\fill (1,6) circle (0.25);
\fill (2,1) circle (0.25);
\fill (2,3) circle (0.25);
\fill (2,4) circle (0.25);
\fill (2,5) circle (0.25);
\fill (2,6) circle (0.25);
\fill (3,1) circle (0.25);
\fill (3,2) circle (0.25);
\fill (3,3) circle (0.25);
\fill (3,4) circle (0.25);
\fill (3,5) circle (0.25);
\fill (3,6) circle (0.25);
\fill (4,1) circle (0.25);
\fill (4,2) circle (0.25);
\fill (4,3) circle (0.25);
\fill (4,0) circle (0.25);
\fill (4,5) circle (0.25);
\fill (4,6) circle (0.25);
\fill (5,1) circle (0.25);
\fill (5,2) circle (0.25);
\fill (5,3) circle (0.25);
\fill (5,4) circle (0.25);
\fill (5,5) circle (0.25);
\fill (5,6) circle (0.25);
\fill (6,1) circle (0.25);
\fill (6,2) circle (0.25);
\fill (6,3) circle (0.25);
\fill (6,4) circle (0.25);
\fill (6,5) circle (0.25);
\fill (6,6) circle (0.25);

\end{scope}

\begin{scope}[yshift=-9cm, xshift=13.5cm]

\draw[step=1.0,black,thin,xshift=-0.5cm,yshift=-0.5cm] (0,0) grid (7,7);
\fill (0,1) circle (0.25);
\fill (0,4) circle (0.25);
\fill (1,0) circle (0.25);
\fill (1,1) circle (0.25);
\fill (1,2) circle (0.25);
\fill (1,4) circle (0.25);
\fill (1,5) circle (0.25);
\fill (1,6) circle (0.25);
\fill (2,1) circle (0.25);
\fill (2,2) circle (0.25);
\fill (2,3) circle (0.25);
\fill (2,4) circle (0.25);
\fill (2,5) circle (0.25);
\fill (2,6) circle (0.25);
\fill (4,0) circle (0.25);
\fill (3,2) circle (0.25);
\fill (3,3) circle (0.25);
\fill (3,4) circle (0.25);
\fill (3,5) circle (0.25);
\fill (3,6) circle (0.25);
\fill (4,1) circle (0.25);
\fill (4,2) circle (0.25);
\fill (4,3) circle (0.25);
\fill (4,5) circle (0.25);
\fill (4,6) circle (0.25);
\fill (5,1) circle (0.25);
\fill (5,2) circle (0.25);
\fill (5,3) circle (0.25);
\fill (5,4) circle (0.25);
\fill (5,6) circle (0.25);
\fill (6,1) circle (0.25);
\fill (6,2) circle (0.25);
\fill (6,3) circle (0.25);
\fill (6,4) circle (0.25);
\fill (6,5) circle (0.25);
\fill (6,6) circle (0.25);

\end{scope}

\begin{scope}[yshift=-9cm, xshift=22.5cm]

\draw[step=1.0,black,thin,xshift=-0.5cm,yshift=-0.5cm] (0,0) grid (7,7);
\fill (0,2) circle (0.25);
\fill (0,4) circle (0.25);
\fill (1,1) circle (0.25);
\fill (1,2) circle (0.25);
\fill (1,3) circle (0.25);
\fill (1,4) circle (0.25);
\fill (1,5) circle (0.25);
\fill (1,6) circle (0.25);
\fill (2,0) circle (0.25);
\fill (2,1) circle (0.25);
\fill (2,3) circle (0.25);
\fill (2,4) circle (0.25);
\fill (2,5) circle (0.25);
\fill (2,6) circle (0.25);
\fill (3,1) circle (0.25);
\fill (3,2) circle (0.25);
\fill (3,3) circle (0.25);
\fill (3,4) circle (0.25);
\fill (3,5) circle (0.25);
\fill (3,6) circle (0.25);
\fill (4,0) circle (0.25);
\fill (4,1) circle (0.25);
\fill (4,2) circle (0.25);
\fill (4,3) circle (0.25);
\fill (4,4) circle (0.25);
\fill (4,6) circle (0.25);
\fill (5,1) circle (0.25);
\fill (5,2) circle (0.25);
\fill (5,3) circle (0.25);
\fill (5,5) circle (0.25);
\fill (5,6) circle (0.25);
\fill (6,1) circle (0.25);
\fill (6,2) circle (0.25);
\fill (6,3) circle (0.25);
\fill (6,4) circle (0.25);
\fill (6,5) circle (0.25);

\end{scope}

\end{tikzpicture}
\caption{The line-free set in Theorem \ref{LB7} for $p=7$.}
\end{figure}

\begin{proof}[Proof of Theorem \ref{LB7}]
Let $p=7+24\ell$ for $\ell\in\mathbb{Z}_{\geq0}$, let $A$ be the set of quadratic residues, that is, $A=\{a^2\ |\ a\in\mathbb{F}_p^*\}$ and $B:=\mathbb{F}_p^*\setminus A$. Note that $|A|=|B|=\frac{p-1}{2}$ and the law of quadratic reciprocity yields 

$$\legendre{-1}{p}=(-1)^{\frac{p-1}{2}}=(-1)^{3+12\ell}=-1,$$

$$\legendre{2}{p}=(-1)^{\frac{p^2-1}{8}}=(-1)^{6+42\ell+72\ell^2}=1,$$

$$\legendre{3}{p}=(-1)^{\frac{p-1}{2}\frac{3-1}{2}}\legendre{p}{3}=(-1)^{3+12\ell}\legendre{1}{3}=-1,$$

 and therefore $2\in A$ and $\{-1,3\}\subseteq B$. Note here that $A$ is a subgroup of $\mathbb{F}_p^*$ and this means that multiplication by $2$ or $\frac{1}{2}$ leaves elements of $A$ or $B$ in the same set, while multiplication by $-1$, $3$ or $\frac{1}{3}$ changes the set. For instance, $3a\in B$ for all $a\in A$ and $-\frac{3b}{2}=(-1)\cdot 3\cdot \frac{1}{2} \cdot b \in B$ for all $b\in B$.
Let 
\begin{equation*}
\begin{split}   
S:=&[1,p-1]^3\cup \big(\{(a,0,a)|a\in A\}\cup \{(0,a,a)|a\in A\}\big) \\ 
\setminus &\big(\{(a,a,a)|a\in A\}\cup \{(a/2,a/2,a)|a\in A\}\big) \\ 
 \cup &\big(\{(3b/2,0,b)|b\in B\}\cup \{(0,3b/2,b)|b\in B\}\cup (3b,0,b)|b\in B\}\cup \{(0,3b,b)|b\in B\}\big) \\
\setminus &\big(\{(b,b,b)|b\in B\}\cup \{(3b/2,3b/2,b)|b\in B\}\cup\{(b/3,b/3,b)|b\in B\}\big) \\
\setminus &\big( \{(3b,-3b/2,b)|b\in B\}\cup \{(-3b/2,3b,b)|b\in B\}\big) \\
\cup &\big(\{(b,b,0)|b\in B\}\cup \{(2a,-a,0)|a\in A\}\cup (-a,2a,0)|a\in A\}\big).
\end{split}
\end{equation*}
We will show that $S$ is $p$-progression-free.

Note that $S$ is symmetric in the first two coordinates. We will therefore, in this proof, skip one of two symmetric cases, whenever possible.

Let  $L:=\{(c_1,c_2,c_3)+(d_1,d_2,d_3)i \ |\  i\in [0,p-1]\}$ be a $p$-progression in $\mathbb{F}_p^3$ with $c_1,c_2,c_3,d_1,d_2,d_3\in\mathbb{F}_p$ and assume that $L\subseteq S$.

First, assume that $d_3=0$. 
\begin{itemize}
\item Case 1: $c_3=0$:

\noindent
Since $S$ contains no points where the third and one of the first two coordinates is $0$, $L$ is not contained in $S$.

\smallskip
\item Case 2: $c_3\in A$:

\noindent
Let $a:=c_3$. Since $(a, 0, a)$  and $(0,a,a)$ are the only points where the third coordinate is $a$ and one of the first two coordinates is $0$, we can assume $(a, 0, a)\in L$. If $d_1=0$ then $(a,a,a)\in L$, a contradiction. If $d_1\neq 0$ then also $(0,a,a)\in L$ and consequently $(\frac{a}{2},\frac{a}{2},a)=\frac{1}{2}(a, 0, a)+\frac{1}{2}(0,a,a)\in L$, again a contradiction.

\smallskip
\item Case 3: $c_3\in B$: 

\noindent
Let $b:=c_3$. First, assume $d_1\neq0$ and $d_2\neq0$. Since  $(\frac{3b}{2}, 0, b)$, $(0, \frac{3b}{2}, b)$, $(3b, 0, b)$  and $(0,3b,b)$ are the only points where the third coordinate is $b$ and one of the first two coordinates is $0$, we only have to consider the following cases:

If $(\frac{3b}{2}, 0, b)\in L$ and $(0, \frac{3b}{2}, b)\in L$, then also $(-\frac{3b}{2},3b , b)=(-1)(\frac{3b}{2}, 0, b)+2(0, \frac{3b}{2}, b)\in L$, if  $(\frac{3b}{2}, 0, b)\in L$ and $(0,3b,b)\in L$, then also $(b,b , b)=\frac{2}{3}(\frac{3b}{2}, 0, b)+\frac{1}{3}(0, 3b, b)\in L$ and if $(3b, 0, b)\in L$ and $(0,3b,b)\in L$, then also $(\frac{3b}{2},\frac{3b}{2}, b)=\frac{1}{2}(3b, 0, b)+\frac{1}{2}(0, 3b, b)\in L$. Consequently, we arrived at a contradiction.
Now, if $d_1=0$ or $d_2=0$, again $L$ has to contain one of the points $(\frac{3b}{2}, 0, b)$, $(0, \frac{3b}{2}, b)$, $(3b, 0, b)$, $(0,3b,b)$ and therefore $L$ also contains one of the points $(\frac{3b}{2},\frac{3b}{2},b)$, $(3b,\frac{-3b}{2},b)$, $(\frac{-3b}{2},3b,b)$, again a contradiction.

\end{itemize}

Now assume that $d_3\neq0$. If $d_1=d_2=0$, $L$ contains a point with a zero last coordinate. We get that either $(b,b,0)$ and therefore also $(b,b,b)$ is in $L$ for some $b\in B$ or $(2a,-a,0)\in L$ for some $a\in A$ and therefore also $(3b,\frac{-3b}{2},b)\in L$ for the unique $b\in B$ such that $3b=2a$, both a contradiction.

In the remaining case $d_3\neq 0$ and at least one of $d_1$ and $d_2$ is non-zero. Since there is no point in $S$ where both the third and one of first two coordinates is zero, $L$ has to include a point with third coordinate being zero and a different point where one of the other two coordinates is zero. We are therefore left with checking the following cases, where $L$ is given by a pair of two points in $S$. For some of these cases it is important to note that $S$ contains no points where one of the first two coordinates is $0$ and the other is in $B$.

\begin{itemize}
    \item $(b,b,0)\in L$ and $(0,a,a)\in L$: $$L=\bigg\{\begin{pmatrix}
b\\
b\\
0
\end{pmatrix}+k\begin{pmatrix}
-b\\
a-b\\
a
\end{pmatrix}\bigg|\ k\in\mathbb{F}_p\bigg\}$$

Since $a\neq b$, setting $k:=\frac{b}{b-a}$, we get $(x,y,z):=(-\frac{ab}{b-a},0,\frac{ab}{b-a})\in L$. Now $x=-z$ and therefore $z\in B$ and $x\in A$, so $-1=\frac{3}{2}$ or $-1=3$, a contradiction.

    \item $(b,b,0)\in L$ and $(0,\frac{3b'}{2},b')\in L$: $$L=\bigg\{\begin{pmatrix}
b\\
b\\
0
\end{pmatrix}+k\begin{pmatrix}
-b\\
\frac{3b'}{2}-b\\
b'
\end{pmatrix}\bigg|\ k\in\mathbb{F}_p\bigg\}$$

Since $\frac{3b'}{2}\neq b$, setting $k:=\frac{b}{b-\frac{3b'}{2}}$, we get $(x,y,z):=(-\frac{3bb'}{2(b-\frac{3b'}{2})},0,\frac{bb'}{b-\frac{3b'}{2}})\in L$. Now $x=-\frac{3}{2}z$ and therefore $x,z\in A$, so $-\frac{3}{2}=1$, a contradiction.

    \item $(b,b,0)\in L$ and $(0,3b',b')\in L$: $$L=\bigg\{\begin{pmatrix}
b\\
b\\
0
\end{pmatrix}+k\begin{pmatrix}
-b\\
3b'-b\\
b'
\end{pmatrix}\bigg|\ k\in\mathbb{F}_p\bigg\}$$

Since $3b'\neq b$, setting $k:=\frac{b}{b-3b'}$, we get $(x,y,z):=(-\frac{3bb'}{b-3b'},0,\frac{bb'}{b-3b'})\in L$. Now $x=-3z$ and therefore $x,z\in A$, so $-3=1$, a contradiction.

    \item $(2a,-a,0)\in L$ and $(0,a',a')\in L$: $$L=\bigg\{\begin{pmatrix}
2a\\
-a\\
0
\end{pmatrix}+k\begin{pmatrix}
-2a\\
a'+a\\
a'
\end{pmatrix}\bigg|\ k\in\mathbb{F}_p\bigg\}$$

Since $a'\neq -a$, setting $k:=\frac{a}{a+a'}$, we get $(x,y,z):=(\frac{2aa'}{a+a'},0,\frac{aa'}{a+a'})\in L$. Now $x=2z$ and therefore $x,z\in A$, so $2=1$, a contradiction.

    \item $(2a,-a,0)\in L$ and $(a',0,a')\in L$: $$L=\bigg\{\begin{pmatrix}
2a\\
-a\\
0
\end{pmatrix}+k\begin{pmatrix}
a'-2a\\
a\\
a'
\end{pmatrix}\bigg|\ k\in\mathbb{F}_p\bigg\}$$

Assume $a'\neq 2a$, then setting $k:=\frac{2a}{2a-a'}$, we get $(x,y,z):=(0,\frac{aa'}{2a-a'},\frac{2aa'}{2a-a'})\in L$. Now $2y=z$ and therefore $y,z\in A$, so $1=2$, a contradiction.
If $a'=2a$, then setting $k:=3$, we get $(x,y,z):=(2a,2a,6a)\in L$, a contradiction since $6a\in B$.

\item $(2a,-a,0)\in L$ and $(\frac{3b}{2},0,b)\in L$: $$L=\bigg\{\begin{pmatrix}
2a\\
-a\\
0
\end{pmatrix}+k\begin{pmatrix}
\frac{3b}{2}-2a\\
a\\
b
\end{pmatrix}\bigg|\ k\in\mathbb{F}_p\bigg\}$$

Assume $\frac{3b}{2}\neq 2a$, then setting $k:=\frac{2a}{2a-\frac{3b}{2}}$, we get $(x,y,z):=(0,\frac{3ab}{2(2a-\frac{3b}{2})},\frac{2ab}{2a-\frac{3b}{2}})\in L$. Now $y=\frac{3z}{4}$ and therefore $z\in B$ and $x\in A$, so $\frac{3}{4}=\frac{3}{2}$ or $\frac{3}{4}=3$, a contradiction.
If $\frac{3b}{2}= 2a$, then setting $k:=3$, we get $(x,y,z):=(2a,2a,4a)\in L$, a contradiction since $4a\in A$.

\item $(2a,-a,0)\in L$ and $(0,\frac{3b}{2},b)\in L$: $$L=\bigg\{\begin{pmatrix}
2a\\
-a\\
0
\end{pmatrix}+k\begin{pmatrix}
-2a\\
\frac{3b}{2}+a\\
b
\end{pmatrix}\bigg|\ k\in\mathbb{F}_p\bigg\}$$

Assume $\frac{3b}{2}\neq -3a$, then setting $k:=\frac{3a}{\frac{3b}{2}+3a}$, we get $(x,y,z):=(\frac{3ab}{\frac{3b}{2}+3a},\frac{3ab}{\frac{3b}{2}+3a},\frac{3ab}{\frac{3b}{2}+3a})\in L$. Now $x=y=z$, a contradiction.
If $\frac{3b}{2}= -3a$, then setting $k:=-\frac{1}{2}$, we get $(x,y,z):=(3a,0,a)\in L$, so $1=3$, a contradiction.

\item $(2a,-a,0)\in L$ and $(3b,0,b)\in L$: $$L=\bigg\{\begin{pmatrix}
2a\\
-a\\
0
\end{pmatrix}+k\begin{pmatrix}
3b-2a\\
a\\
b
\end{pmatrix}\bigg|\ k\in\mathbb{F}_p\bigg\}$$

Since $b\neq a$, then setting $k:=\frac{a}{a-b}$, we get $(x,y,z):=(\frac{ab}{a-b},\frac{ab}{a-b},\frac{ab}{a-b})\in L$. Now $x=y=z$, a contradiction.

\item $(2a,-a,0)\in L$ and $(0,3b,b)\in L$: $$L=\bigg\{\begin{pmatrix}
2a\\
-a\\
0
\end{pmatrix}+k\begin{pmatrix}
-2a\\
a+3b\\
b
\end{pmatrix}\bigg|\ k\in\mathbb{F}_p\bigg\}$$

since $3b\neq -a$, setting $k:=\frac{a}{a+3b}$, we get $(x,y,z):=(\frac{6ab}{a+3b},0,\frac{ab}{a+3b})\in L$. Now $x=6z$ and therefore $z\in B$ and $x\in A$, so $6=\frac{3}{2}$ or $6=3$, a contradiction.

\end{itemize}

Finally, note that the layer with third coordinate $0$ contains $|B|+2|A|=\frac{3}{2}(p-1)$ points, layers with the third coordinate in $A$ contain $(p-1)^2$ points and layers with the third coordinate in $B$ contain $(p-1)^2-1$ points and thus $$|S|=\frac{p-1}{2}(p-1)^2+\frac{p-1}{2}((p-1)^2-1)+\frac{3}{2}(p-1)=(p-1)^3+(p-1).$$

In the special case of $p=7$, the layers with third coordinate in $B$ actually contain $(p-1)^2=36$ points, since $\frac{3}{2}=\frac{1}{3}$, thus giving the lower bound of $225$.

\end{proof}

\subsection*{Acknowledgements}
C.E. was supported by a joint 
 FWF-ANR project ArithRand
 (I 4945-N and ANR-20-CE91-0006).
J.F. was supported by the Austrian Science Fund (FWF) under the project W1230.
D.G.S. was supported by the ERC Advanced Grant "Geoscape". 
B.K. was supported by the ÚNKP-23-3, New National Excellence Program of the Ministry for Culture and Innovation from the source of the National Research, Development and Innovation Fund. P.P.P. was supported by the Lend\"ulet program of the Hungarian Academy of Sciences (MTA) and by the National Research, Development and Innovation Office NKFIH (Grant Nr. K129335 and K146387). E.F., B.K., D.G.S. and N.V. would like to thank the Hungarian REU 2022 program. The authors also thank Zolt\'an L\'or\'ant Nagy for comments on this manuscript.

\printbibliography 

\clearpage
\section{Appendix}

\begin{figure}[H]
\centering
\begin{tikzpicture}[yscale=1,scale=0.5]

\begin{scope}
\draw[step=1.0,black,thin,xshift=-0.5cm,yshift=-0.5cm] (0,0) grid (5,5);

\fill (3,1) circle (0.25);
\fill (1,3) circle (0.25);
\fill (3,2) circle (0.25);
\fill (2,3) circle (0.25);
\fill (2,4) circle (0.25);
\fill (4,2) circle (0.25);

\end{scope}
\begin{scope}[xshift=7cm]

\draw[step=1.0,black,thin,xshift=-0.5cm,yshift=-0.5cm] (0,0) grid (5,5);
\fill (0,1) circle (0.25);
\fill (1,0) circle (0.25);
\fill (1,2) circle (0.25);
\fill (1,3) circle (0.25);
\fill (1,4) circle (0.25);
\fill (2,1) circle (0.25);
\fill (2,2) circle (0.25);
\fill (2,3) circle (0.25);
\fill (2,4) circle (0.25);
\fill (3,1) circle (0.25);
\fill (3,2) circle (0.25);
\fill (3,4) circle (0.25);
\fill (4,1) circle (0.25);
\fill (4,2) circle (0.25);
\fill (4,3) circle (0.25);
\fill (4,4) circle (0.25);

\end{scope}
\begin{scope}[xshift=14cm]

\draw[step=1.0,black,thin,xshift=-0.5cm,yshift=-0.5cm] (0,0) grid (5,5);
\fill (0,3) circle (0.25);
\fill (0,4) circle (0.25);
\fill (1,1) circle (0.25);
\fill (1,2) circle (0.25);
\fill (1,4) circle (0.25);
\fill (2,1) circle (0.25);
\fill (2,2) circle (0.25);
\fill (2,3) circle (0.25);
\fill (2,4) circle (0.25);
\fill (3,0) circle (0.25);
\fill (3,1) circle (0.25);
\fill (3,3) circle (0.25);
\fill (3,4) circle (0.25);
\fill (4,0) circle (0.25);
\fill (4,1) circle (0.25);
\fill (4,3) circle (0.25);

\end{scope}

\begin{scope}[yshift=-7cm, xshift=3.5cm]

\draw[step=1.0,black,thin,xshift=-0.5cm,yshift=-0.5cm] (0,0) grid (5,5);
\fill (0,3) circle (0.25);
\fill (0,4) circle (0.25);
\fill (1,1) circle (0.25);
\fill (1,2) circle (0.25);
\fill (1,4) circle (0.25);
\fill (2,1) circle (0.25);
\fill (2,2) circle (0.25);
\fill (3,2) circle (0.25);
\fill (4,2) circle (0.25);
\fill (3,0) circle (0.25);
\fill (1,3) circle (0.25);
\fill (3,3) circle (0.25);
\fill (3,4) circle (0.25);
\fill (4,0) circle (0.25);
\fill (4,1) circle (0.25);
\fill (4,3) circle (0.25);

\end{scope}
\begin{scope}[yshift=-7cm, xshift=10.5cm]

\draw[step=1.0,black,thin,xshift=-0.5cm,yshift=-0.5cm] (0,0) grid (5,5);
\fill (0,1) circle (0.25);
\fill (1,0) circle (0.25);
\fill (1,2) circle (0.25);
\fill (1,3) circle (0.25);
\fill (1,4) circle (0.25);
\fill (2,1) circle (0.25);
\fill (2,2) circle (0.25);
\fill (2,3) circle (0.25);
\fill (2,4) circle (0.25);
\fill (3,1) circle (0.25);
\fill (3,2) circle (0.25);
\fill (3,4) circle (0.25);
\fill (4,1) circle (0.25);
\fill (4,2) circle (0.25);
\fill (4,3) circle (0.25);
\fill (4,4) circle (0.25);

\end{scope}

\end{tikzpicture}
\caption{A line-free set showing $r_p(\mathbb{F}_p^3)\geq 70$}
\label{fig70}
\end{figure}
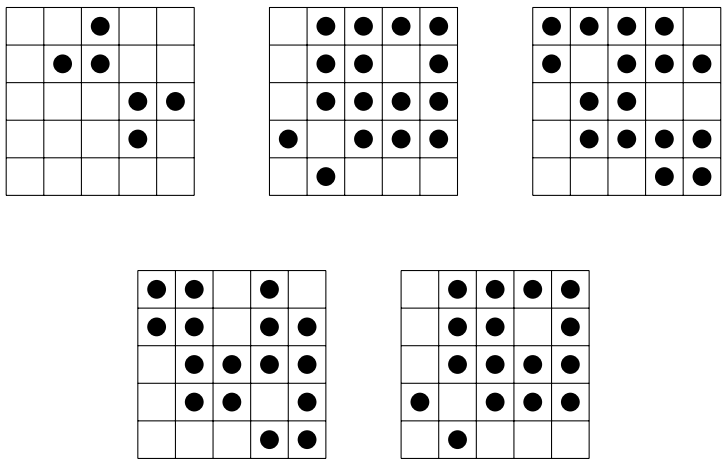

\begin{table}[h!]
\centering
\begin{tabular}{ |c|c|c|c|c|c| } 
\hline
$p$ & $5$ & $7$ & $11$ & $13$ & $17$ \\
\hline
$n=3$ & $4.041$ & $6.027$ & $10.016$ & $12.013$ & $16.010$ \\ 
$n=4$ & $4.046$ & $6.034$ & $10.022$ & $12.019$ & $16.014$ \\ 
$n=5$ & \textcolor{lightgray}{$4.041$} & \textcolor{lightgray}{$6.034$}  & $10.024$ & $12.020$ & $16.016$ \\ 
$n=6$ & \textcolor{lightgray}{$4.034$}  & \textcolor{lightgray}{$6.031$}  & \textcolor{lightgray}{$10.024$}   & $12.021$ & $16.017$ \\ 
$n=7$ & \textcolor{lightgray}{$4.027$}  & \textcolor{lightgray}{$6.028$}  & \textcolor{lightgray}{$10.023$}  & \textcolor{lightgray}{$12.020$}  & $16.017$ \\ 
\hline
$n=2p$ & $4.090$ & $6.066$ & $10.043$ & $12.037$ & $16.028$ \\ 
\hline

\end{tabular}
\caption{The bases $a$ for the lower bounds $r_p(\mathbb{F}_p^n)\geq a^n$ that can be achieved by Theorem~\ref{LBhighn} for small primes. The last row gives bounds that can only be used for dimensions at least $2p$, using the results of Frankl et al.~\cite{FGR}.}
\label{table:1}
\end{table}

\end{document}